\def\color[#1]#2{}
\newtheorem{theorem}{Theorem}
\newtheorem{lemma}{Lemma}
\newcommand{\po}{\mathcal{P}}
\newcommand{\Po}{\mathcal{P}}
\newcommand{\F}{\mathcal{F}}
\newcommand{\K}{\mathcal{K}}
\theoremstyle{comment}
\newtheorem*{acomment}{\color{BrickRed}{Comment}}
\definecolor{OliveGreen}{rgb}{.3,.5,.2}
\definecolor{MidnightBlue}{rgb}{.3,.4,.6}
\definecolor{BrickRed}{rgb}{.6,.1,.3}
\newcommand{\veci}[1]{\ensuremath{\mathbf{#1}}}
\title[Cube-like Complexes]{Cube-like Incidence Complexes and Their Groups}
\author{Andrew C. Duke}
\address{Andrew C. Duke\\
Northeastern University,
Department of Mathematics,
Boston, MA 02115, USA}
\email{duke.a@husky.neu.edu}
\author{Egon Schulte}
\address{Egon Schulte\\
Northeastern University,
Department of Mathematics,
Boston, MA 02115, USA}
\email{schulte@neu.edu}
\date{\today}
\thanks{UDC Classification: 514 (Geometry)}
\thanks{E.S. supported by NSA-grant H98230-14-1-0124}
\begin{document}
\maketitle

\begin{abstract}
The article studies power complexes and generalized power complexes, and investigates the algebraic structure of their automorphism groups. The combinatorial incidence structures involved are cube-like, in the sense that they have many structural properties in common with higher-dimensional cubes and cubical tessellations on manifolds. Power complexes have repeatedly appeared in applications.
\end{abstract}

\section{Introduction}

Combinatorial, geometric or topological structures built from cubes or cube-like components have been studied extensively. The present paper studies two particularly interesting types of cube-like structures:\ power complexes and generalized power complexes. The former were first discovered by Danzer in the early 1980's (see \cite{dan,arp,esext}). Power complexes that are also abstract polytopes have attracted a lot of attention and have been described in detail in McMullen \& Schulte~\cite[Ch. 8]{arp}. They have repeatedly appeared somewhat unexpectedly in various mathematical applications, although often under a different name; for example, see Coxeter~\cite{crsp}, Effenberger \& K\"uhnel~\cite{eff}, K\"uhnel~\cite{kuet}, Beineke \& Harary~\cite{beh}, Ringel~\cite{ri}, Brehm, K\"{u}hnel \& Schulte~\cite{bks}, Pisanski, Schulte \& Weiss~\cite{psw} and Monson \& Schulte~\cite{mosc}. Structures arising from power complexes have also been used to describe fault-tolerant communication networks (see Bhuyan \& Agrawal~\cite{ba}, Dally~\cite{dally}, Fu, Chen \& Duh~\cite{fcd} and Ghose \& Desai~\cite{gd}).

The purpose of this article is to investigate the structure of the automorphism group of the power complexes $n^\K$ and describe a generalization of the polytopes $\mathcal{L}^{\K,\mathcal{G}}$ introduced in \cite[Ch. 8B]{arp}. Our discussion is in terms of incidence complexes, a broad class of ranked incidence structures closely related to abstract polytopes, ranked partially ordered sets, and incidence geometries (see~\cite{kom1,kom2}). In Section~\ref{bano} we review key facts about regular incidence complexes and generating sets of their automorphism groups. Section~\ref{pow} establishes basic properties of general power complexes, and Section~\ref{grouppow} determines the wreath product structure of their automorphism groups. Then in Section~\ref{nktwist} we explain how a regular power complex can be recovered from a small flag-transitive subgroup of its automorphism group by exploiting twisting techniques akin to those of \cite[Ch. 8]{arp}. Finally, Section~\ref{twist} describes generalized power complexes.

This paper is dedicated to our friend and colleague Nikolai Dolbilin, whose works include remarkable contributions to the study of cubical structures (for example, see Dolbilin, Shtanko \& Shtogrin~\cite{dss1,dss2}). 

\section{Basic Notions}
\label{bano}

Incidence complex are patterned after convex polytopes. The notion is originally due to Danzer~\cite{dan,kom1} and was inspired by Gr\"unbaum~\cite{grgcd}. The special case of abstract polytopes has recently attracted a lot of attention (see~\cite{arp}). Incidence complexes can also be viewed as incidence geometries or diagram geometries with a linear diagram (see Buekenhout-Pasini~\cite{bup}, Leemans~\cite{leem} and Tits~\cite{tib}), although here we study them from the somewhat different discrete geometric and combinatorial perspective of polytopes and ranked partially ordered sets.

Following~\cite{kom1} (and \cite{kom2}), an {\em incidence complex $\K$ of rank $k$\/}, or simply a {\em $k$-complex\/}, is a partially ordered set, with elements called {\em faces\/}, which has the following properties (I1),\ldots,(I4). \smallskip

\noindent
\textbf{(I1)}\  $\K$ has a least face $F_{-1}$ and a greatest face $F_k$, called the {\it improper\/} faces.
\smallskip

\noindent
\textbf{(I2)}\  Every totally ordered subset, or {\em chain\/}, of $\K$ is contained in a (maximal) totally ordered subset of $\K$ with exactly $k+2$ elements, called a {\em flag\/}.
\smallskip

These two conditions make $\K$ into a ranked partially ordered set, with a strictly monotone rank function with range $\{-1,0,\ldots,k\}$. A face of rank $i$ is called an $i$-\textit{face}.  A face of rank $0$, $1$ or $n-1$ is also called a {\em vertex\/}, an {\em edge\/} or a {\em facet}, respectively. 
\smallskip

\noindent
\textbf{(I3)}\  $\K$ is {\em strongly flag-connected\/}, meaning that if $\Phi$ and $\Psi$ are two flags of $\K$, then there is a finite sequence of flags $\Phi=\Phi_0,\Phi_1,\ldots,\Phi_{m-1},\Phi_{m}=\Psi$, all containing $\Phi\cap\Psi$, such that successive flags are {\em adjacent\/} (differ in just one face).
\smallskip

\noindent
\textbf{(I4)}\  There exist cardinal numbers $c_0,\ldots,c_{k-1}\geq 2$, for our purposes taken to be finite, such that, whenever $F$ is an $(i-1)$-face and $G$ a $(i+1)$-face with $F < G$, the number of $i$-faces~$H$ with $F<H<G$ is given by $c_i$.

Two adjacent flags are said to be {\em $i$-adjacent\/}, for $i=0,\ldots,k-1$, if they differ exactly in their $i$-faces. Condition (I4) can be rephrased by saying that the number of flags $i$-adjacent to a given flag equals $c_{i}-1$ for each $i$.

For an $i$-face $F$ and a $j$-face $G$ with $F < G$ we call
\[ G/F := \{ H \in \K \, | \, F \leq H \leq G \} \]
a \textit{section} of $\K$. This is an incidence complex in its own right, of rank $j-i-1$ and with cardinal numbers $c_{i+1},\ldots,c_{j-1}$. Usually we identify a $j$-face $G$ of $\K$ with the $j$-complex $G/F_{-1}$. Likewise, if $F$ is an $i$-face, the $(k-i-1)$-complex $F_k/F$ is called the {\em co-face\/} of $F$ in $\K$, or the \textit{vertex-figure} at $F$ if $F$ is a vertex.

An {\em abstract $k$-polytope\/}, or briefly {\em $k$-polytope\/}, is an incidence complex of rank $k$ such that $c_i=2$ for $i=0,\ldots,k-1$ (see~\cite{arp}). Note that a polytope is a complex in which every flag has precisely one $i$-adjacent flag for each $i$. An incidence complex of rank $1$ with $v$ vertices is also called a $v$-{\em edge} and is denoted $\{\}_v$ (here $v=c_0$).

The \textit{automorphism group} $\Gamma(\K)$ of an incidence complex $\K$ is the group of all order-preserving bijections of $\K$. A complex $\K$ is said to be \textit{regular} if $\Gamma(\K)$ is transitive on the flags of $\K$. Unlike regular polytopes, an arbitrary regular complex $\K$ need not be simply flag-transitive, so in general $\Gamma(\K)$ has nontrivial flag-stabilizers. 

Now let $\Gamma$ be any flag-transitive subgroup of the full automorphism group $\Gamma(\K)$ of a regular $k$-complex $\K$. It was shown in \cite{kom2} that any such group (including $\Gamma(\K)$ itself) has a distinguished system of generating subgroups obtained as follows (the proof for polytopes is given in \cite[Ch. 2B]{arp}). Suppose $\Phi:=\{F_{-1},F_0,\ldots,F_k\}$ is a fixed, or {\em base flag\/}, of $\K$, where~$F_i$ designates the $i$-face in $\Phi$ for each $i$. For each $\Omega\subseteq \Phi$ let $\Gamma_\Omega$ denote the stabilizer of $\Omega$ in $\Gamma$, so in particular $\Gamma_\emptyset = \Gamma$ and $\Gamma_\Phi$ is the stabilizer of $\Phi$. Then
\begin{equation}
\label{genga}
\Gamma = \langle R_{-1},R_0,\ldots,R_{k} \rangle ,
\end{equation}
where for $i=-1,0,\ldots,k$ the $i$\textsuperscript{th} subgroup $R_i$ is given by 
\[ R_{i} :=  \Gamma_{\Phi\setminus\{F_i\}} = \langle \varphi\in \Gamma \mid F_j\varphi =F_j \mbox{ for all } j\neq i\rangle .\]
Each $R_i$ contains $\Gamma_\Phi$, and even coincides with $\Gamma_\Phi$ when $i=-1$ or $k$. This shows that in generating $\Gamma$, the subgroups $R_{-1}$ and $R_{k}$ are redundant when $k>0$. In addition, 
\begin{equation}
\label{ci}
c_i := |R_{i}:\Gamma_\Phi | \quad\; (i=0,\ldots,k-1). 
\end{equation}
Moreover, the following commutation property holds at the level of groups (but not generally at the level of elements):
\begin{equation}
\label{commu}
R_i \cdot R_j = R_j \cdot R_i \qquad (-1\leq i < j-1 \leq k-1). 
\end{equation}

The stabilizers of subchains of the base flag in $\Gamma$ can be expressed in terms of the generating subgroups. More explicitly, if $\Omega\subseteq \Phi$ then 
\[ \Gamma_\Omega = \langle R_i \mid -1\leq i\leq k,\,F_{i}\not\in\Omega\rangle .\]

Further, for $I\subseteq\{-1,0,\ldots,k\}$ define $\Gamma_{I} := \langle R_i \mid i\in I\rangle$; when $I=\emptyset$ the latter is taken to mean $R_{-1}=\Gamma_\Phi$. Thus 
\[ \Gamma_{I} = \Gamma_{\{F_j\mid j\not\in I\}} \quad (I\subseteq\{-1,0,\ldots,k\}); \] 
or equivalently, 
\[ \Gamma_\Omega = \Gamma_{\{i\mid F_i \not\in\Omega\}} \quad (\Omega\subseteq \Phi). \]

Moreover, the group $\Gamma$ satisfies the following important {\em intersection property\/}: 
\begin{equation}
\label{intprop}
\Gamma_I \cap \Gamma_J = \Gamma_{I\cap J}\qquad (I,J\subseteq \{-1,0,\ldots,k\}) . 
\end{equation}

The partial order on $\K$ can be completely characterized in terms of the distinguished generating subgroups of $\Gamma$. In fact, 
\[ F_{i}\varphi \leq F_{j}\psi\; \longleftrightarrow\; \psi^{-1}\varphi \in \Gamma_{\{i+1,\ldots,k\}}\Gamma_{\{-1,0,\ldots,j-1\}}
\;\quad (-1\leq i\leq j\leq k;\, \varphi,\psi\in\Gamma) ,\]
or equivalently,
\begin{equation}
\label{partorder}
F_{i}\varphi \leq F_{j}\psi\, \longleftrightarrow \,
\Gamma_{\{-1,0,\ldots,k\}\setminus\{i\}}\varphi \cap \Gamma_{\{-1,0,\ldots,k\}\setminus\{j\}}\psi \neq \emptyset
\quad (-1\leq i\leq j\leq k;\, \varphi,\psi\in\Gamma). 
\end{equation}

It was also established in \cite{kom2} that if $\Gamma$ is any group with a system of generating subgroups $R_{-1},R_0,\ldots,R_k$ such that (\ref{commu}), (\ref{genga}) and (\ref{intprop}) hold, and $R_{-1}=R_k$, then $\Gamma$ is a flag-transitive subgroup of the automorphism group $\Gamma(\K)$ of a regular incidence complex $\K$ of rank $k$ (see \cite[Ch. 2E]{arp} for the proof for polytopes). As $i$-faces for $\K$ we take the right cosets of $\Gamma_{\{-1,0,\ldots,k\}\setminus\{i\}}$ for each $i$, and then define the partial order by (\ref{partorder}). The homogeneity parameters $c_0,\ldots,c_{k-1}$ then are determined by (\ref{ci}).

Abstract regular polytopes have only one flag-transitive subgroup, namely the full automorphism group $\Gamma(\K)$. In this case the flag stabilizer $\Gamma_\Phi$ is trivial and each subgroup $R_i$ (with $i=0,\ldots,k-1$) is generated by an involutory automorphism $\rho_i$ that maps $\Phi$ to its unique $i$-adjacent flag. The group of an abstract regular polytope is then what is called a {\em string C-groups\/} (see \cite{arp}), that is, the {\em distinguished involutory generators\/} $\rho_0, \ldots, \rho_{k-1}$ satisfy both the commutativity relations typical of a Coxeter group with string diagram, and the intersection property~(\ref{intprop}).

The regular complex polytopes in unitary complex space $\mathbb{C}^k$ are regular incidence complexes. Only the real polytopes among them are also abstract regular polytopes (see \cite{cox1,shep}). The  geometric (unitary) symmetry group of a regular complex polytope is a complex unitary reflection group in $\mathbb{C}^k$ acting flag-transitively (see \cite{sheptodd}); however, the combinatorial automorphism group is generally larger.
The regular complex $k$-cube 
\[ \gamma_{k}^{n} := n \{4\} 2 \{3\} 2 \cdots 2 \{3\} 2 ,\]
with $k\geq 1$ and $n\geq 2$ (and with $k-2$ entries $3$ in the symbol), has $n^k$ vertices, each with an ordinary $(k-1)$-simplex $\{3^{k-2}\}$ as vertex-figure, and $nk$ facets, each isomorphic to $\gamma_{k-1}^{n}$ if $k\geq 2$ (see \cite{cox1}). When $n=2$ this is the ordinary $k$-cube $\gamma_k = \gamma_{k}^2$. 

\section{Power complexes}
\label{pow}

The first published account of Danzer's power complex construction can be found in~\cite{esext}. (Danzer's  article about the construction announced in~\cite{dan} was never written.) The power complex $\mathcal{P}=n^\K$, with $n\geq 2$ and $\K$ a finite $k$-complex, is a $(k+1)$-complex with $n$ vertices on each edge and with each vertex-figure isomorphic to $\K$ (see \cite{esext}). The complexes $\Po$, with $n=2$ and $\K$ a polytope, are abstract polytopes and have been studied extensively (see \cite[Ch. 8C, D]{arp}). Power complexes are generalized cubes; when $\K$ has simplex facets, $\Po$ can also be viewed as a cubical complex (see \cite{buch,nov}). 

Let $n\geq 2$ and set $N:=\{1,\ldots,n\}$. Let $\K$ be a finite $k$-complex ($k>0$), not necessarily regular, with $v$ vertices and vertex-set $V:=\{1,\ldots,v\}$. Suppose $\K$ is {\it vertex-describable\/}, meaning that the faces of $\K$ are uniquely determined by their vertex-sets. Then $\Po$ will be a finite $(k+1)$-complex with vertex-set 
\begin{equation}
\label{nv}
N^v = \bigotimes_{i=1}^{v} N ,
\end{equation}
the cartesian product of $v$ copies of $N$; its $n^v$ vertices are written as row vectors 
$\varepsilon := (\varepsilon_1,\ldots,\varepsilon_v)$. Bearing in mind that the faces of $\K$ can be viewed as subset of $V$, we take as $j$-faces ($j\geq 0$) of $\Po$, for any $(j-1)$-face $F$ of $\K$ and any vector $\varepsilon = (\varepsilon_1,\ldots,\varepsilon_v)$ in $N^v$, the subsets $F(\varepsilon)$ of $N^v$ defined by
\begin{equation}
\label{fep}
F(\varepsilon) := \{(\eta_1,\ldots,\eta_{v})\in N^v \! \mid \eta_{i} = \varepsilon_{i} \mbox{ if } i\not\in F\}
\end{equation}
or, with slight abuse of notation, the cartesian product
\[ F(\varepsilon) \,:=\, ( \bigotimes_{i \in F} N ) \times ( \bigotimes_{i \not\in F} \{\varepsilon_i\} ). \]
Thus the $j$-face $F(\varepsilon)$ of $\mathcal{P}$ consists of the vectors in $N^v$ that coincide with $\varepsilon$ in precisely the components determined by the vertices of $\K$ not lying in $F$. It follows that, if $F$, $F'$ are faces of $\K$ and $\varepsilon=(\varepsilon_1,\ldots,\varepsilon_v)$, $\varepsilon' =(\varepsilon_{1}',\ldots,\varepsilon_{v}')$ belong to~$N^v$, then $F(\varepsilon) \subseteq F'(\varepsilon')$ in $\mathcal{P}$ if and only if $F \leq F'$ in $\K$ and $\varepsilon_i = \varepsilon_{i}'$ for each $i\not\in F'$. 

The set of all faces $F(\varepsilon)$, with $F$ a face of $\K$ and $\varepsilon$ in $N^v$, partially ordered by inclusion (and supplemented by the empty set as least face), is the desired {\em power complex\/}, $n^{\K}=\Po$. Each vertex-figure of $\Po$ is isomorphic to $\K$. If $F$ is a $(j-1)$-face of $\K$ and $\F:=F/F_{-1}$ is the $(j-1)$-complex determined by~$F$, then the $j$-faces of $\Po$ of the form $F(\varepsilon)$ with $\varepsilon$ in $N^v$ are isomorphic to the power complex $n^\F$ of rank~$j$. Moreover, $\po$ is regular if $\K$ is regular.  Complete proofs of these facts can be found in \cite{du,dusc} (see also \cite[Section 8D]{arp} and \cite{mosc,esext}).

The regular complex cubes $\gamma_{k}^n$ described in the previous section are examples of power complexes. In particular, $\gamma_{k}^{n} = n^{\alpha_{k-1}}$, where $\alpha_{k-1}$ denotes the $(k-1)$-simplex $\{3^{k-2}\}$. More examples are described below.

Power complexes behave nicely with respect to skeletons. Let $0\leq j\leq k-1$. Recall that the {\em $j$-skeleton\/} $skel_j(\K)$ of a $k$-complex $\K$ is the $(j+1)$-complex with faces those of $\K$ of rank at most $j$ or of rank $k$ (the $k$-face of $\K$ becomes the $(j+1)$-face of $skel_j(\K)$). If $\K$ is vertex-describable, then   
\begin{equation}
\label{skel}
skel_{j+1}(n^\K) = n^{skel_j(\K)} 
\end{equation}
(see \cite{dusc}). For example, if $\K$ is a $v$-edge $\{\}_v$ (of rank $1$), then identifying $\K$ with $skel_{0}(\alpha_{v-1})$ gives
\[ n^{\{\}_v} = n^{skel_{0}(\alpha_{v-1})} = skel_{1}(n^{\alpha_{v-1}}) = skel_{1}(\gamma_v^n).\]
Thus the $2$-complex $n^{\{\}_v}$ is isomorphic to the $1$-skeleton of the unitary complex $v$-cube $\gamma_v^n$ described above. 

\section{The group of a power complex}
\label{grouppow}

In this section we determine the structure of the automorphism group of a power complex. Let again $\Po:=n^\K$, where $n$ and $\K$ are as above. The automorphism group $\Gamma(\Po)$ of~$\Po$ contains a subgroup isomorphic to $S_n\wr \Gamma(\K) = S_{n}^{v}\rtimes \Gamma(\K)$, the wreath product of $S_n$ and $\Gamma(\K)$ defined by the natural action of $\Gamma(\K)$ on the vertex-set of $\K$ (see \cite{dusc}). In particular, an automorphism $\varphi$ of $\K$ determines an automorphism $\widehat{\varphi}$ of $\Po$ as follows. For a vertex $\varepsilon = (\varepsilon_1, \dotsc, \varepsilon_v)$ of $\Po$, we have
\begin{equation}
\label{autk}
\varepsilon\widehat{\varphi} = (\varepsilon_{1\varphi},\dotsc, \varepsilon_{v\varphi}) =: \varepsilon_{\varphi},
\end{equation}
and, more generally, for a face $F(\varepsilon)$ of $\Po$, 
\begin{equation}
\label{autfacek} 
F(\varepsilon)\widehat{\varphi} := (F\varphi)(\varepsilon_\varphi). 
\end{equation}
More generally, for $\sigma \in S_n^v$ and $\varphi \in \Gamma(\K)$, the typical element $\theta = \sigma\varphi$ of $S_n^v\ltimes \Gamma(\K)$ acts on $\Po$ according to
\begin{equation}
\label{action}
F(\varepsilon)\theta := F(\varepsilon\sigma)\widehat{\varphi} = (F\varphi)((\varepsilon\sigma)_\varphi).
\end{equation}
Note that $S_n\wr \Gamma(\K)$ is vertex-transitive on $\Po$ and that its vertex stabilizers in $S_n\wr \Gamma(\K)$ are isomorphic to $S_{n-1}\wr \Gamma(\K)$.
\medskip

\begin{theorem}
\label{grstruc} 
Let $\mathcal{K}$ be a finite vertex-describable incidence complex with automorphism group $\Gamma(\K)$, and let $n\geq 2$. Then the power complex $n^\K$ has automorphism group isomorphic to~$S_n \wr \Gamma(\K)$. 
\end{theorem}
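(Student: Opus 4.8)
The inclusion $S_n\wr\Gamma(\K)\le\Gamma(\Po)$ together with its vertex-transitivity is already available, so the entire content is the reverse inclusion. Since both groups act transitively on the vertex set $N^v$, the orbit--stabilizer theorem reduces everything to comparing the stabilizers of the base vertex $\varepsilon^0:=(1,\dots,1)$, and as $\St_{S_n\wr\Gamma(\K)}(\varepsilon^0)\le\St_{\Gamma(\Po)}(\varepsilon^0)$ automatically, I only need to prove $\St_{\Gamma(\Po)}(\varepsilon^0)\subseteq S_n\wr\Gamma(\K)$. First I would record that $\St_{S_n\wr\Gamma(\K)}(\varepsilon^0)=(S_{n-1})^v\rtimes\Gamma(\K)$, which follows from (\ref{action}) since $\sigma\varphi$ fixes $\varepsilon^0$ exactly when every factor of $\sigma$ fixes the symbol $1$.

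The main tool is the homomorphism $\pi\colon\St_{\Gamma(\Po)}(\varepsilon^0)\to\Gamma(\K)$ given by the induced action on the vertex-figure of $\Po$ at $\varepsilon^0$, which is isomorphic to $\K$. It is surjective because (\ref{autfacek}) shows that $\widehat{\varphi}$ fixes $\varepsilon^0$ and sends the face $F(\varepsilon^0)$ to $(F\varphi)(\varepsilon^0)$, so $\pi(\widehat\varphi)=\varphi$. It then remains to identify $\ker\pi$. An element $\psi\in\ker\pi$ fixes $\varepsilon^0$ and every face through $\varepsilon^0$ setwise, in particular each edge $\{j\}(\varepsilon^0)$; reading off the permutation $\tau_j\in S_{n-1}$ that $\psi$ induces on the $n$ vertices of this edge (each $\tau_j$ fixes the symbol $1$ because $\psi$ fixes $\varepsilon^0$) produces $\sigma:=(\tau_1,\dots,\tau_v)\in(S_{n-1})^v\le S_n\wr\Gamma(\K)$. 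Then $\psi':=\psi\sigma^{-1}$ fixes $\varepsilon^0$ together with every vertex adjacent to it, and the whole problem reduces to the following claim, which I expect to be the crux: \emph{an automorphism $\psi'$ of $\Po$ fixing $\varepsilon^0$ and all of its neighbors must be the identity.}

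The decisive observation for this claim is that adjacency in $\Po$ is purely coordinatewise: two vertices lie on a common edge iff they differ in exactly one coordinate, because every index $j\in\{1,\dots,v\}$ is a vertex of $\K$, so $\{j\}(\eta)$ is a genuine edge of $\Po$ for every $\eta$. Hence the face structure of $\K$ is irrelevant here, the neighbors of $\varepsilon^0$ are exactly the weight-one vectors, and the graph distance from $\varepsilon^0$ equals the Hamming weight (the number of coordinates different from $1$), a quantity $\psi'$ preserves. I would then induct on the weight $d$. Given $\varepsilon$ of weight $d\ge 2$ with support $S$, choose distinct $i,j\in S$ and let $u$ and $u'$ be $\varepsilon$ with its $i$-th, respectively $j$-th, coordinate reset to $1$; both have weight $d-1$ and so are fixed by the inductive hypothesis. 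Because $\psi'$ preserves adjacency and weight, $\psi'(\varepsilon)$ is a weight-$d$ vertex adjacent to both $u$ and $u'$, and a short coordinate computation shows that $\varepsilon$ is the \emph{unique} such vertex, forcing $\psi'(\varepsilon)=\varepsilon$. Since $\Po$ is vertex-describable, fixing every vertex yields $\psi'=\mathrm{id}$.

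Assembling the pieces, the claim gives $\psi=\sigma\in(S_{n-1})^v$, so $\ker\pi=(S_{n-1})^v$ and $\St_{\Gamma(\Po)}(\varepsilon^0)=(S_{n-1})^v\rtimes\Gamma(\K)\le S_n\wr\Gamma(\K)$, which is the required reverse inclusion. The single point needing genuine care is the uniqueness of the common weight-$d$ neighbor of $u$ and $u'$: the tempting route of propagating information through $2$-faces of $\Po$ breaks down for general $\K$, since edges of $\K$ may carry more than two vertices and $\K$ need not be regular, but the coordinatewise description of adjacency sidesteps this difficulty completely.
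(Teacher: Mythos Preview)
Your proof is correct and follows the same overall strategy as the paper: reduce to the stabilizer of the base vertex, use the induced action on the vertex-figure to strip off the $\Gamma(\K)$-part, use the action on the edges through the base vertex to strip off an element of $(S_{n-1})^v$, and then show by induction on Hamming weight that what remains is the identity.

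The one substantive difference lies in the weight induction. The paper inducts simultaneously on vertices \emph{and} edges: vertices of weight $m$ are fixed because the $m$ edges of weight $m-1$ through them are fixed, and then a separate, fairly intricate argument (tracking a length-four incidence chain, equations \eqref{eqmu}--\eqref{eqflag2}) is needed to show that edges of weight $m$ are fixed as well. You bypass the edge step entirely by exploiting that the $1$-skeleton of $\Po$ is the Hamming graph $H(v,n)$: graph distance from $\varepsilon^0$ equals weight, and a vertex $\varepsilon$ of weight $d\ge 2$ is the \emph{unique} common weight-$d$ neighbour of any two of its ``coordinate-reset'' neighbours $u,u'$ of weight $d-1$. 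This is shorter and more conceptual; the only extra ingredient you use that the paper does not is the metric characterisation of weight, which is immediate from the description of edges.
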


\begin{proof}
We already mentioned that $S_n \wr \Gamma(\K)$ is a vertex-transitive subgroup of $\Gamma(\Po)$. In particular, $\Gamma(\Po)$ contains a vertex-transitive subgroup isomorphic to $S_n^v$; for each $i$ in $V$, a copy of $S_n$ acts on the $i$\textsuperscript{th} component of the vectors in $N^v$ leaving all other components unchanged. Additionally, by (\ref{autk}) applied with $\eta = \veci{1}:=(1,\dotsc, 1)$, the group $\Gamma(\K)$ is naturally embedded in $\Gamma(\Po)$ as a subgroup of the vertex stabilizer of the {\em base} vertex $(1,\dotsc, 1)$ of $\Po$.

We will now show that $S_n\wr \Gamma(\K)$ coincides with $\Gamma(\Po)$. First note that by the vertex-transitivity of $S_n\wr \Gamma(\K)$ on $\Po$ it is sufficient to investigate the structure of the stabilizer of a {\em base\/} vertex of $\Po$ in $\Gamma(\Po)$. In fact, up to an element from $S_n\wr \Gamma(\K)$ each automorphism of $\Po$ can be made to coincide with an automorphism in the stabilizer of the base vertex in $\Gamma(\Po)$.

As base vertex we choose $\veci{1}$. For $i \in V$ and $j \in N$, let 
\[ \veci{1}_i^j := (1, \dotsc, 1, j, 1, \dotsc, 1)\] 
be the vector obtained from $\veci{1}$ by replacing the $i$\textsuperscript{th} component by $j$. In designating edges of~$\Po$, we find it convenient to use two separate notations for vertices of $\K$, namely $1, \dotsc, v$ and $G_1, \dotsc, G_v$, with the understanding that $G_i$ is identified with $i$ for $i = 1, \dotsc, v$. The first notation views a vertex of $\K$ as an element of $V$, and the second emphasizes that a vertex is a face of $\K$. Notice that $G_i(\veci{1})$ is an edge of $\Po$ with vertices $\veci{1}_i^j$ for $j\in N$.

Now suppose an element $\rho \in \Gamma(\Po)$ fixes the base vertex $\veci{1}$ of $\Po$, that is, $\veci{1}\rho = \veci{1}$. We will show that there are elements $\varphi \in \Gamma(\K)$ and $\sigma \in S_n^v$ such that $\rho\varphi\sigma$ is the identity automorphism of $\Po$. We begin by constructing $\varphi$.

First note that, since $\rho$ fixes $\veci{1}$, it induces an automorphism on the vertex-figure $F_{d}(\veci{1})/\veci{1}$ of $\Po$ at $\veci{1}$, which is isomorphic to $\K$. If $\varphi$ is the automorphism of $\K$ corresponding to the restriction of $\rho^{-1}$ to the vertex-figure $F_{d}(\veci{1})/\veci{1}$, then $\rho' = \rho\varphi$ is an automorphism of $\Po$ that acts trivially on the entire vertex-figure $F_{d}(\veci{1})/\veci{1}$ at $\veci{1}$. Thus $F(\veci{1})\rho' = F(\veci{1})$ for all faces $F$ of $\K$. Furthermore, since in particular each edge $G_i(\veci{1})$ is fixed under $\rho'$, the vertices $\veci{1}_i^j$ of $G_i(\veci{1})$ are permuted among each other by $\rho'$.

Next we construct $\sigma$ by viewing $\rho'$ as a permutation on $N^v$. Recall that $S_n^v$ is a subgroup of $\Gamma(\Po)$. More explicitly define  
\[S_n^{(i)}  := \langle 1\rangle\! \times\! \dotsm\! \times\! \langle 1 \rangle \!
\times\! S_n \!\times  \!\langle 1\rangle\!\times\! \dotsm\! \times\!\langle 1\rangle,\] 
where the $S_n$ occurs in position $i$. Thus $S_n^v = S_n^{(1)} \times \dotsm \times S_n^{(v)}$. For each $i \in V$ there exists an automorphism $\sigma_i$ of $\Po$ in $S_n^{(i)}$ such that 
$\veci{1}^j_i\rho'\sigma_i= \veci{1}^j_i$ 
for all $j$, and 
$\veci{1}^j_k\rho'\sigma_i = \veci{1}^j_k\rho'$ 
for all $k\neq i$ and all $j$. Setting 
$\sigma := \sigma_1\sigma_2 \dotsc \sigma_v$ 
and taking 
$\rho'' := \rho'\sigma = \rho\varphi\sigma$, 
we observe that $\rho''$ fixes the vertices and edges in Figure~\ref{fig1}, that is, all vertices $\veci{1}_i^j$ with $i \in V$ and $j \in N$, and all edges $G_i(\veci{1})$ with $i \in V$.

\begin{figure}[htb]
\centering
\resizebox{300pt}{100pt}{\setlength{\unitlength}{3947sp}%
\begingroup\makeatletter\ifx\SetFigFont\undefined%
\gdef\SetFigFont#1#2#3#4#5{%
  \reset@font\fontsize{#1}{#2pt}%
  \fontfamily{#3}\fontseries{#4}\fontshape{#5}%
  \selectfont}%
\fi\endgroup%
\begin{picture}(7830,2382)(1111,-7873)
\put(6601,-7786){\makebox(0,0)[lb]{\smash{{\SetFigFont{14}{16.8}{\familydefault}{\mddefault}{\updefault}{\color[rgb]{0,0,0}$\mathbf{1}^n_i$}%
}}}}
\thinlines
{\color[rgb]{0,0,0}\put(1126,-7561){\line( 0, 1){1800}}
}%
{\color[rgb]{0,0,0}\put(1126,-7561){\line( 3, 2){2751.923}}
}%
{\color[rgb]{0,0,0}\put(1126,-5761){\line( 3,-2){2648.077}}
}%
{\color[rgb]{0,0,0}\put(3901,-5761){\line( 6,-5){2080.328}}
}%
{\color[rgb]{0,0,0}\put(1126,-5761){\line( 5,-6){1530.738}}
}%
{\color[rgb]{0,0,0}\put(5326,-7561){\line(-4, 5){1434.146}}
}%
{\color[rgb]{0,0,0}\put(7576,-7561){\line(-1, 2){885}}
}%
{\color[rgb]{0,0,0}\put(3901,-5761){\line( 3,-2){2700}}
}%
{\color[rgb]{0,0,0}\put(6751,-5761){\line( 6,-5){2205.738}}
}%
{\color[rgb]{0,0,0}\put(6751,-5761){\line( 4,-5){1434.146}}
}%
\put(1126,-7786){\makebox(0,0)[lb]{\smash{{\SetFigFont{14}{16.8}{\familydefault}{\mddefault}{\updefault}{\color[rgb]{0,0,0}$\mathbf{1}$}%
}}}}
\put(2701,-7786){\makebox(0,0)[lb]{\smash{{\SetFigFont{14}{16.8}{\familydefault}{\mddefault}{\updefault}{\color[rgb]{0,0,0}$\mathbf{1}^j_1$}%
}}}}
\put(1126,-5686){\makebox(0,0)[lb]{\smash{{\SetFigFont{14}{16.8}{\familydefault}{\mddefault}{\updefault}{\color[rgb]{0,0,0}$G_1(\mathbf{1})$}%
}}}}
\put(5326,-7786){\makebox(0,0)[lb]{\smash{{\SetFigFont{14}{16.8}{\familydefault}{\mddefault}{\updefault}{\color[rgb]{0,0,0}$\mathbf{1}^2_i$}%
}}}}
\put(3751,-7786){\makebox(0,0)[lb]{\smash{{\SetFigFont{14}{16.8}{\familydefault}{\mddefault}{\updefault}{\color[rgb]{0,0,0}$\mathbf{1}^n_1$}%
}}}}
\put(3826,-5686){\makebox(0,0)[lb]{\smash{{\SetFigFont{14}{16.8}{\familydefault}{\mddefault}{\updefault}{\color[rgb]{0,0,0}$G_i(\mathbf{1})$}%
}}}}
\put(6676,-5686){\makebox(0,0)[lb]{\smash{{\SetFigFont{14}{16.8}{\familydefault}{\mddefault}{\updefault}{\color[rgb]{0,0,0}$G_v(\mathbf{1})$}%
}}}}
\put(8926,-7786){\makebox(0,0)[lb]{\smash{{\SetFigFont{14}{16.8}{\familydefault}{\mddefault}{\updefault}{\color[rgb]{0,0,0}$\mathbf{1}^n_v$}%
}}}}
\put(8176,-7786){\makebox(0,0)[lb]{\smash{{\SetFigFont{14}{16.8}{\familydefault}{\mddefault}{\updefault}{\color[rgb]{0,0,0}$\mathbf{1}^j_v$}%
}}}}
\put(7576,-7786){\makebox(0,0)[lb]{\smash{{\SetFigFont{14}{16.8}{\familydefault}{\mddefault}{\updefault}{\color[rgb]{0,0,0}$\mathbf{1}^2_v$}%
}}}}
\put(5926,-7786){\makebox(0,0)[lb]{\smash{{\SetFigFont{14}{16.8}{\familydefault}{\mddefault}{\updefault}{\color[rgb]{0,0,0}$\mathbf{1}^j_i$}%
}}}}
{\color[rgb]{0,0,0}\put(1126,-7561){\line( 3, 1){5602.500}}
}%
\end{picture}
\caption{Vertices and edges fixed by $\rho''$.}
\label{fig1}
\end{figure}

The {\em weight\/} of a vertex $\varepsilon = (\varepsilon_1, \dotsc, \varepsilon_v)$ is the number of components $\varepsilon_i$ distinct from~$1$. The set $S:=\{i\mid \varepsilon_{i}\neq 1\}$ is called the {\em support\/} of $\varepsilon$. Similarly, define the {\em weight\/} of an edge $G_j(\varepsilon)$ as the number of $\varepsilon_l$, $l\neq j$, that are distinct from~$1$. We have just established the fact that $\rho''$ fixes all vertices of weight $1$ and all edges of weight $0$, so in particular $\rho''$ fixes all vertices and edges of weight $0$.

We will show by induction on $m$ that all vertices and edges of weight $m$ are fixed by $\rho''$. The statement holds for $m=0$ as well as for vertices when $m=1$. Now let $m \geq 1$ and assume inductively that $\rho''$ fixes all vertices and edges of weight $m-1$.

In the proof for vertices we may additionally assume that $m\geq 2$, since the statement is true for $m=1$. Every vertex of weight $m$ lies in exactly $m$ edges of weight $m-1$; namely a vertex $\varepsilon$ of weight $m$ with support $S$ is incident to all edges of the form $G_{i}(\varepsilon)$ with $i\in S$. Let $\varepsilon$ be such a vertex and let $\varepsilon\rho'' = \eta$; we need to show that $\eta = \varepsilon$. Since $\rho''$ is known to fix all edges of weight $m-1$, we have 
\[ \eta = \varepsilon\rho''\in G_i(\varepsilon)\rho'' = G_i(\varepsilon) \] 
and hence $\eta_j = \varepsilon_j$ for $j\neq i$, for each $i\in S$. Hence we have $\eta=\varepsilon$ since $|S|=m \geq 2$, and $\rho''$ fixes $\varepsilon$ as desired. This proves the statement for each vertex of weight $m$.

To establish the statement for edges of weight $m\,(\geq 1)$, let $\varepsilon$ be a vertex of weight $m$ with support $S$. An edge $G_j(\varepsilon)$ has weight $m-1$ or $m$ according as $j\in S$ or $j\notin S$. By inductive hypothesis the edges $G_j(\varepsilon)$ with $j\in S$ are fixed by $\rho''$. Now let $j\notin S$ and let $G_j(\varepsilon)\rho'' = G_{j'}(\varepsilon)$ for some $j'$ where necessarily $j'\notin S$; note here that $\rho''$ fixes $\varepsilon$. We need to show $j' = j$. Let $i_m$ denote the largest index in $S$, so in particular $j \neq i_m$. Define the two vertices $\mu = (\mu_1,\ldots,\mu_v)$ and $\nu=(\nu_1,\ldots,\nu_v)$ by 
\begin{equation}
\label{eqmu}
\mu_{i}=1, 2,\varepsilon_{i} \mbox{ according as } i = i_m \mbox{ or }i=j \mbox{ or }i\neq i_{m},j,
\end{equation}
and
\begin{equation}
\label{eqnu}
\nu_{i}=2,\varepsilon_{i} \mbox{ according as } i=j \mbox{ or }i\neq j.
\end{equation}
Then the sequence
\begin{equation}
\label{eqflag1}
\varepsilon, G_j(\varepsilon), \nu,G_{i_m}(\nu),\mu
\end{equation}
consists of sequentially incident vertices and edges of $\Po$, and so does its image under $\rho''$. Now since  $G_j(\varepsilon)\rho'' = G_{j'}(\varepsilon)$ and the vertices $\varepsilon$ and $\mu$ of weight $m$ are fixed by $\rho''$, we obtain the image sequence
\begin{equation}
\label{eqflag2}
\varepsilon, G_{j'}(\varepsilon), \nu\rho'',G_{i_m}(\nu)\rho'',\mu.
\end{equation}
However, $\mu\in G_{i_m}(\nu)\rho''$ just means that $G_{i_m}(\nu)\rho'' = G_l(\mu)$ for some $l \in V$; and 
\[ \nu\rho'' \in G_{i_m}(\nu)\rho'' = G_l(\mu)\] 
implies that $(\nu\rho'')_i = \mu_i$ for $i\neq l$. (For a vector $\eta=(\eta_1,\ldots,\eta_{v})$ we let $(\eta)_i:=\eta_i$ for each~$i$.) On the other hand, from \eqref{eqflag2} we have $\nu\rho''\in G_{j'}(\varepsilon)$ and hence $(\nu\rho'')_i = \varepsilon_i$ for $i \neq j'$. It follows that, since $(\nu\rho'')_i = \mu_i$ for $i\neq l$ and $(\nu\rho'')_i = \varepsilon_i$ for $i\neq j'$, we must have $\mu_i = \varepsilon_i$ for $i\neq j', l$. By the definition of $\mu$ this forces $\{l, j'\} = \{i_m, j\}$. Since $j, j'\notin S$ but $i_m\in S$, this then gives $j = j'$ (and $l = i_m$). But this is what we want to show. Thus all the edges $G_j(\varepsilon)$, with $\varepsilon$ of weight $m$, are fixed by $\rho''$ as well. But this includes all edges of $\Po$ of weight $m$, so the induction is complete.

Therefore, $\rho''$ fixes all vertices and edges of $\Po$. Now, since $\Po$ is vertex-describable all faces of $\Po$ are uniquely determined by their vertex-sets. It follows that $\rho''$ must fix every face of~$\Po$. Therefore, $\rho''$ is the identity on $\Po$, and $\rho = \sigma^{-1}\varphi^{-1}\in S_n\wr \Gamma(\K)$.
\end{proof}

Based on the above structure result for the automorphism group we also use the alternate notation for $n^\K$ of $n\wr\K$. Then Theorem~\ref{grstruc} says that
\[ \Gamma(n\wr\K)=S_n \wr \Gamma(\K) .\]
When $\K$ is a $v$-edge $\{\}_v$ we also write $n\wr v$ in place of the $2$-complex $n^{\{\}_v}=n\wr {\{\}_v}$. In particular, 
\[ \Gamma(n\wr v) = S_{n}\wr S_{v} .\]
\smallskip

For the remainder of this section we assume that $\K$ is regular. Then the $(k+1)$-complex $\Po$ is also regular and the distinguished generating subgroups $\widehat{R}_{-1}, \widehat{R}_0, \dotsc, \widehat{R}_{k+1}$ for $\Gamma(\Po)=S_n \wr \Gamma(\K)$ can be obtained as follows (\cite{esnotes}).

Suppose $\Gamma(\K) = \langle R_{-1}, R_0, \dotsc, R_{k} \rangle$, where $R_{-1}, R_0, \dotsc, R_{k}$ are the distinguished generating subgroups for $\Gamma(\K)$ with respect to a flag $\Phi = \{F_{-1}, F_0, \dotsc, F_{k}\}$ of $\K$. Here we take $F_0 = 1$ in the labeling of the vertices of $\K$. Recall that $R_{-1} = R_{k}$ is the stabilizer of $\Phi$ in~$\Gamma(\K)$. As base vertex for $\Po$ we now take $\veci{n} = (n,\dotsc, n)$, not $\veci{1}$, and as base flag we choose
\[\Phi(\veci{n}) := \{\emptyset, \veci{n}, F_0(\veci{n}), \dotsc, F_{k}(\veci{n})\}.\]
Here $F_{k}(\veci{n}) = N^v$ and
\[F_0(\veci{n}) = N \times \{n\} \times \dotsm \times \{n\}.\]

First we determine the stabilizer of the base flag, $\widehat{R}_{-1}$. Clearly, the subgroup $S_{n-1}\wr R_{-1}$ of $\Gamma(\Po)$ stabilizes $\Phi(\veci{n})$, since $R_{-1}$ stabilizes $\Phi$ in $\K$ and the subgroup $S_{n-1}^v$ of $S_{n}^v$ (with each component subgroup $S_{n-1}^{(i)}$ acting on $1, \dotsc, n-1$) stabilizes~$\veci{n}$. We claim that $S_{n-1}\wr R_{-1}$ is already the full flag stabilizer of $\Phi(\veci{n})$. Suppose $\theta \in \Gamma(\Po)=S_n^v\ltimes \Gamma(\K)$ fixes $\Phi(\veci{n})$. If $\theta=\sigma\varphi$ with $\varphi \in \Gamma(\K)$ and $\sigma=\sigma_{1}\ldots\sigma_v \in S_n^v$ (with $\sigma_{i}\in S_{n}^{(i)}$ as above), then \eqref{action} gives 
\[ F_j(\veci{n}) = F_j(\veci{n})\theta = (F_j\varphi)((\veci{n}\sigma)_\varphi) \]
for each $j = -1, 0, \dotsc, k$. Hence, since $\K$ is vertex-describable, $F_j\varphi = F_j$ for all $j$ and so $\varphi$ lies in $R_{-1}$, the stabilizer of $\Phi$ in $\Gamma(\K)$. Moreover, 
\[ \veci{n}\sigma= \veci{n}\theta\varphi^{-1} = \veci{n}\varphi^{-1} = \veci{n}\] 
and hence $(n)\sigma_{i}=n$ for each $i=1,\ldots,v$, so $\sigma$ lies in the subgroup $S_{n-1}^v$ of $S_n^v$. Thus the stabilizer of $\Phi(\veci{n})$ is given by $S_{n-1} \wr R_{-1}$, that is, $\widehat{R}_{-1}=S_{n-1} \wr R_{-1}$. 

Next we find $\widehat{R}_{0}$. Suppose $\theta = \sigma\varphi$ fixes all faces in $\Phi(\veci{n})$ except possibly the vertex $\veci{n}$. Then $\varphi$ fixes $\Phi$ in $\K$ and thus $\varphi \in R_{-1}$. Moreover, since $F_0((\veci{n}\sigma)_\varphi) = F_0(\veci{n})$, the vertices $\veci{n}$ and 
$(\veci{n}\sigma)_\varphi = (n\sigma_{1\varphi},\ldots,n\sigma_{v\varphi})$ coincide in all but possibly the first components. Hence, since $F_{0}=1$ and therefore $1\varphi=1$, we must have $(n)\sigma_{j}=n$ for $j\geq 2$. 
It follows that $\sigma \in S_n \times S_{n-1}^{v-1}$ and 
\[ \theta \in \langle S_n \times S_{n-1}^{v-1}, R_{-1} \rangle = (S_n \times S_{n-1}^{v-1})\ltimes R_{-1}.\]
Conversely, the elements of this group fix all faces of the base flag except possibly the vertex. Thus 
$\widehat{R}_{0}= (S_n \times S_{n-1}^{v-1})\ltimes R_{-1}$.

For $\widehat{R}_{i}$ with $i=1\ldots,k$, suppose $\theta=\sigma\varphi$ fixes all faces of $\Phi(\veci{n})$ except the $i$-face $F_i(\veci{n})$. Then 
\[ F_j(\veci{n}) = F_j(\veci{n})\theta = (F_j\varphi)((\veci{n}\sigma)_\varphi) \;\;\; (j \neq i). \]
For $j = -1$ this shows that $\veci{n}\theta = \veci{n}$, that is, $(\veci{n}\sigma)_\varphi = \veci{n}$ and hence $\sigma \in S_{n-1}^v$. Also $F_j\varphi = F_j$ for $j \neq i$; so $\varphi \in R_i$, the stabilizer of $\Phi\setminus \{F_i\}$ in $\K$, and $\theta \in S_{n-1} \wr R_i$. Conversely, $S_{n-1} \wr R_i$ is a subgroup of $\widehat{R}_{i}$. Thus $\widehat{R}_{i} = S_{n-1} \wr R_i$.

In summary, we have the following result.
Therefore, we have the following as the generating subgroups for $\Gamma(\Po) =S_n \wr \Gamma(\K)$.

\begin{theorem}
If $\K$ is a finite vertex-describable regular $k$-complex with automorphism group $\Gamma(\K) = \langle R_{-1}, R_0, \dotsc, R_{k} \rangle$, then $\Po$ is a regular $(k+1)$-complex and the distinguished generating subgroups $\widehat{R}_{-1}, \widehat{R}_0, \dotsc, \widehat{R}_{k+1}$ of its automorphism group $\Gamma(\Po)=S_n \wr \Gamma(\K)$ are given by 
\begin{align*}
\widehat{R}_0     &\,=\, (S_n \times S_{n-1}^v)\ltimes R_{-1},\\
\widehat{R}_i &\,=\, S_{n-1} \wr R_{i-1}\text{ for } i = 1, \dotsc, k,\\
\widehat{R}_{k+1} &\,=\,  \widehat{R}_{-1} \,=\, S_{n-1} \wr R_{-1} .\\
\end{align*}
\end{theorem}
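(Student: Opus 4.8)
The plan is to compute each distinguished generating subgroup $\widehat{R}_i$ straight from its definition as the stabilizer of a subchain of the base flag: by the general theory recalled in Section~\ref{bano}, $\widehat{R}_i$ is the stabilizer in $\Gamma(\Po)$ of all faces of $\Phi(\veci{n})$ other than the rank-$i$ face. Using Theorem~\ref{grstruc} I would write a typical automorphism as $\theta=\sigma\varphi$ with $\sigma=\sigma_1\dotsm\sigma_v\in S_n^v$ and $\varphi\in\Gamma(\K)$, and read off the fixing conditions through the action rule~\eqref{action}, which gives $F_j(\veci{n})\theta=(F_j\varphi)\bigl((\veci{n}\sigma)_\varphi\bigr)$. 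Because $\K$ (hence $\Po$) is vertex-describable, the single equation $F_j(\veci{n})\theta=F_j(\veci{n})$ decouples into the combinatorial condition $F_j\varphi=F_j$ on $\varphi$ together with the arithmetic condition that $(\veci{n}\sigma)_\varphi$ and $\veci{n}$ agree in every component outside $F_j$. The one index that must be tracked with care is the rank shift: $F_j$ has rank $j$ in $\K$, so $F_j(\veci{n})$ has rank $j+1$ in $\Po$, and therefore the rank-$i$ face of $\Phi(\veci{n})$ is $F_{i-1}(\veci{n})$ for $i\geq 1$.

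I would begin with the flag stabilizer $\widehat{R}_{-1}$, imposing $F_j(\veci{n})\theta=F_j(\veci{n})$ for every $j$. The combinatorial conditions say that $\varphi$ fixes all of $\Phi$, so $\varphi\in R_{-1}$, while the arithmetic conditions (applied already to the base vertex) force $\veci{n}\sigma=\veci{n}$, i.e.\ each $\sigma_i$ stabilizes the symbol $n$ and $\sigma\in S_{n-1}^v$; the reverse inclusion is a direct check, giving $\widehat{R}_{-1}=S_{n-1}\wr R_{-1}$. Since the top face $F_k(\veci{n})=N^v$ is improper and is fixed by every automorphism, dropping it imposes nothing new, so $\widehat{R}_{k+1}=\widehat{R}_{-1}$. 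For $\widehat{R}_i$ with $1\leq i\leq k$ I would repeat the argument after deleting only the rank-$i$ face $F_{i-1}(\veci{n})$ from the flag: fixing the base vertex still forces $\sigma\in S_{n-1}^v$, and fixing every $F_j(\veci{n})$ with $j\neq i-1$ forces $\varphi$ to fix $\Phi\setminus\{F_{i-1}\}$, i.e.\ $\varphi\in R_{i-1}$. Hence $\widehat{R}_i=S_{n-1}\wr R_{i-1}$.

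The case I expect to be the genuine obstacle is $\widehat{R}_0$, because dropping the base vertex breaks the uniform wreath pattern and allows $\sigma$ a larger group of choices in one coordinate. Here $\theta$ must fix $\emptyset$ and every $F_j(\veci{n})$ with $j\geq 0$ but may move $\veci{n}$. I would extract the binding constraint from the edge $F_0(\veci{n})$: since $F_0=1$ we have $F_0(\veci{n})=N\times\{n\}\times\dotsm\times\{n\}$, so fixing it forces $\varphi\in R_{-1}$ and $(n)\sigma_l=n$ for every $l\neq 1$, while leaving $\sigma_1$ an arbitrary element of $S_n$. The two points needing care are, first, that none of the larger faces $F_j(\veci{n})$ with $j\geq 1$ impose any further restriction on $\sigma$ (their complementary coordinate sets are contained in that of $F_0$, hence already covered); and second, that because $R_{-1}$ fixes the vertex $1$ it normalizes the subgroup $S_n\times S_{n-1}^{v-1}$ sitting over the coordinate splitting $\{1\}\cup\{2,\dotsc,v\}$, so the product really is the semidirect product $(S_n\times S_{n-1}^{v-1})\ltimes R_{-1}$. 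Checking both inclusions then pins $\widehat{R}_0$ down exactly, completing the list.
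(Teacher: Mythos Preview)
Your proposal is correct and follows essentially the same approach as the paper: both compute each $\widehat{R}_i$ directly as the stabilizer of the appropriate subchain of $\Phi(\veci{n})$ in $\Gamma(\Po)=S_n\wr\Gamma(\K)$, using the action formula $F_j(\veci{n})\theta=(F_j\varphi)\bigl((\veci{n}\sigma)_\varphi\bigr)$ to separate the conditions on $\varphi$ from those on $\sigma$. You also handle the rank shift carefully and write $S_n\times S_{n-1}^{v-1}$ for the normal factor in $\widehat{R}_0$, agreeing with the paper's own derivation (the exponent $v$ in the displayed theorem is a typo for $v-1$).
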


If $\K$ is a regular complex, the automorphism group of the power complex $n^\K$ has a large supply of flag-transitive subgroups. In fact, we have the following lemma.

\begin{lemma}
\label{transsub}
Let $\K$ be regular. If $U$ is a subgroup of $S_n$ acting transitively on $\{1,\dots ,n\}$ and $\Lambda$ is a flag-transitive subgroup of $\Gamma(\K)$, then the subgroup $U\wr\Lambda = U^v\ltimes \Lambda$ of $S_n\wr \Gamma(\K)$ is a flag-transitive subgroup of $\Gamma(n^\K)$. 
\end{lemma}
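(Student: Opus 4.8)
The plan is to describe the flags of $\Po=n^\K$ explicitly, to read off how a typical element $\theta=\sigma\varphi$ of $U\wr\Lambda$ acts on them, and then to establish flag-transitivity in two independent steps — one moving the ``$\K$-part'' of a flag and the other moving its ``vertex-part''.

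First I would record the combinatorial description of flags. Using the incidence rule for the faces $F(\varepsilon)$ stated in Section~\ref{pow}, every flag of $\Po$ has the form
\[ \Psi(\Phi,\varepsilon):=\{\emptyset,\{\varepsilon\},F_0(\varepsilon),F_1(\varepsilon),\dots,F_k(\varepsilon)\}, \]
where $\Phi=\{F_{-1},F_0,\dots,F_k\}$ is a flag of $\K$ and $\varepsilon\in N^v$ is the rank-$0$ vertex of the flag. I would verify that $(\Phi,\varepsilon)\mapsto\Psi(\Phi,\varepsilon)$ is a bijection between such pairs and the flags of $\Po$: the proper faces of $\Psi(\Phi,\varepsilon)$ recover the chain $F_{-1}<F_0<\dots<F_k$ of $\K$ (because $\K$, hence $\Po$, is vertex-describable), while the rank-$0$ face $\{\varepsilon\}$ recovers $\varepsilon$; conversely each such pair yields a genuine flag since $F_{i-1}(\varepsilon)\subseteq F_i(\varepsilon)$ whenever $F_{i-1}\le F_i$. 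The only delicate point is that an edge $F_0(\delta)$ appearing in a flag determines $\delta$ only off the single vertex $F_0$; but $F_0(\delta)=F_0(\varepsilon)$ as soon as $\delta$ and $\varepsilon$ agree away from $F_0$, so every face of the flag may be written with the common vertex $\varepsilon$.

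Next I would compute the action. By \eqref{action}, an element $\theta=\sigma\varphi$ with $\sigma\in U^v$ and $\varphi\in\Lambda$ sends $F_i(\varepsilon)$ to $(F_i\varphi)((\varepsilon\sigma)_\varphi)$ and $\{\varepsilon\}$ to $\{(\varepsilon\sigma)_\varphi\}$, so
\[ \Psi(\Phi,\varepsilon)\theta=\Psi(\Phi\varphi,(\varepsilon\sigma)_\varphi). \]
In particular, with $\sigma=1$ an element $\varphi\in\Lambda$ acts by $(\Phi,\varepsilon)\mapsto(\Phi\varphi,\varepsilon_\varphi)$, merely permuting the $\K$-flag and relabelling components; and with $\varphi=1$ an element $\sigma\in U^v$ fixes the $\K$-flag and acts on the vertex alone, componentwise, by $\varepsilon\mapsto\varepsilon\sigma$.

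Finally I would prove transitivity. Given two flags $\Psi(\Phi,\varepsilon)$ and $\Psi(\Phi',\varepsilon')$, flag-transitivity of $\Lambda$ on $\K$ supplies $\varphi\in\Lambda$ with $\Phi\varphi=\Phi'$, carrying $\Psi(\Phi,\varepsilon)$ to $\Psi(\Phi',\varepsilon_\varphi)$. Since $U$ is transitive on $\{1,\dots,n\}$, for each index $i$ there is an element of $U$ taking the $i$-th component of $\varepsilon_\varphi$ to $\varepsilon'_i$; collecting these produces $\sigma\in U^v$ with $(\varepsilon_\varphi)\sigma=\varepsilon'$, and $\sigma$ then carries $\Psi(\Phi',\varepsilon_\varphi)$ to $\Psi(\Phi',\varepsilon')$ while fixing the $\K$-flag. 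The product $\varphi\sigma$ lies in $U\wr\Lambda$ and sends $\Psi(\Phi,\varepsilon)$ to $\Psi(\Phi',\varepsilon')$, so $U\wr\Lambda$ is flag-transitive. The only real obstacle is the setup of the first step — the flag/pair bijection and the action formula — after which transitivity factors cleanly through the two hypotheses, namely transitivity of $U$ on $N$ and flag-transitivity of $\Lambda$ on $\K$, which is precisely why the statement is true.
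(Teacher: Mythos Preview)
Your proof is correct and follows essentially the same two-step factorization as the paper's: the paper simply observes that $U^v$ is vertex-transitive on $n^\K$ and that $\Lambda$ is flag-transitive on the vertex-figure at the base vertex, whence $U\wr\Lambda$ is flag-transitive. You unpack the flag description and the action formula explicitly and apply the two steps in the opposite order (first $\Lambda$, then $U^v$), but the underlying idea is identical.
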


\begin{proof}
First we note that $U\wr \Lambda = U^v\rtimes \Lambda$, where $U^v$ is the direct product of $v$ copies of $U$ acting in the obvious way on the vertex set $N^v$ of $n^\K$. In particular, $U^v$ acts transitively on $N^v$ and hence is a vertex transitive subgroup of $\Gamma(n^\K)$. On the other hand, $\Lambda$ acts flag-transitively on the vertex-figure at the base vertex. Hence $U\wr \Lambda = \langle U^v, \Lambda\rangle$ acts flag-transitively on $n^\K$.
\end{proof}

The (unitary) geometric symmetry group of the regular complex $k$-cubes $\gamma_{k}^n$ is isomorphic to $C_{n}\wr S_k$ and is a simply flag-transitive subgroup of the full combinatorial automorphism group $S_{n}\wr S_k$ of $\gamma_{k}^n$, obtained from Lemma~\ref{transsub} when $U=C_n$.

There are many other interesting power complexes besides $\gamma_{k}^n$. Here we briefly describe three more examples, two related to complex or real polytopes, and another to projective planes. More examples are discussed in \cite{du}.

First, consider the two regular power complexes $n^\K$ obtained when $\K$ is the regular $4$-crosspolytope $\{3,3,4\}$ or the regular complex polygon ($2$-polytope) $3\{3\}3$. For $\{3,3,4\}$, the geometric symmetry group (denoted $[3,3,4]$) is isomorphic to its combinatorial automorphism group. For $3\{3\}3$, the geometric symmetry group (denoted $3[3]3$) is isomorphic to $\rm{SL}(2,3)$; this is a subgroup of index $2$ in the combinatorial automorphism group of $3\{3\}3$, which is isomorphic to $\rm{GL}(2,3)$. Both $\{3,3,4\}$ and $3\{3\}3$ have $8$ vertices, so \eqref{skel} with $j=0$ shows that the corresponding power complexes $n^{\{3,3,4\}}$ and $n^{3\{3\}3}$ have isomorphic 1-skeletons, each with $n^8$ vertices and $8\cdot n^7$ edges ($n$-edges). The vertex-figures are isomorphic to $\{3,3,4\}$ and $3\{3\}3$, respectively, and the facets to complex $4$-cubes $\gamma_{4}^{n} = n^{\{3,3\}}$ and $2$-complexes~$n\wr 3$. The automorphism groups of $n^{\{3,3,4\}}$ and $n^{3\{3\}3}$ are $S_n \wr [3,3,4] = S_n^8 \ltimes [3,3,4]$ of order $384(n!)^8$ and $S_n\wr GL(2,3) \cong S_n^8 \ltimes GL(2,3)$ of order $48(n!)^8$, respectively. 

When $n=2$ we know from \cite[Cor. 8C6]{arp} that $2^{\{3,3,4\}}$ is the cubical regular 5-toroid $\{4,3,3,4\}_{(4,0,0,0)}$ tessellating the $4$-torus. This is an abstract regular $5$-polytope with 256 vertices, each with a $4$-crosspolytope as vertex-figure, and with 256 facets, each a 4-cube $\{4,3,3\}$.
The $3$-complex $2^{3\{3\}3}$ has 256 vertices and $8\cdot 2^7$ edges, which are $2$-edges, and they form the $1$-skeleton of $\{4,3,3,4\}_{(4,0,0,0)}$. Each facet of $2^{3\{3\}3}$ is of type $2\wr 3$, the $1$-skeleton of the ordinary $3$-cube, and there are 256 such facets. Figure~\ref{pow333} shows the vertex-figure at $\veci{1} = (11111111)$ and the facet $124(\veci{1})$, where $124$ represents the $3$-edge of $3\{3\}3$ with vertices $1,2,4$.  The vertex-figures are isomorphic to $3\{3\}3$; in particular, each vertex of $2^{3\{3\}3}$ lies in eight edges ($2$-edges) and eight facets, each isomorphic to the $1$-skeleton of the $3$-cube.  
\begin{figure}[h!]
\centering
\resizebox{250pt}{170pt}{\setlength{\unitlength}{3947sp}%
\begingroup\makeatletter\ifx\SetFigFont\undefined%
\gdef\SetFigFont#1#2#3#4#5{%
  \reset@font\fontsize{#1}{#2pt}%
  \fontfamily{#3}\fontseries{#4}\fontshape{#5}%
  \selectfont}%
\fi\endgroup%
\begin{picture}(4583,3411)(2686,-5914)
\put(6751,-5236){\makebox(0,0)[lb]{\smash{{\SetFigFont{12}{14.4}{\familydefault}{\mddefault}{\updefault}{\color[rgb]{0,0,1}$5(\bf{1})$}%
}}}}
\put(4651,-4486){\makebox(0,0)[lb]{\smash{{\SetFigFont{12}{14.4}{\familydefault}{\mddefault}{\updefault}{\color[rgb]{1,0,0}$4(\bf{1})$}%
}}}}
\put(4951,-3361){\makebox(0,0)[lb]{\smash{{\SetFigFont{12}{14.4}{\familydefault}{\mddefault}{\updefault}$(12111111)$}}}}
\put(3601,-3136){\makebox(0,0)[lb]{\smash{{\SetFigFont{12}{14.4}{\familydefault}{\mddefault}{\updefault}$(22121111)$}}}}
\put(4951,-5386){\makebox(0,0)[lb]{\smash{{\SetFigFont{12}{14.4}{\familydefault}{\mddefault}{\updefault}$(11111211)$}}}}
\put(4351,-5611){\makebox(0,0)[lb]{\smash{{\SetFigFont{12}{14.4}{\familydefault}{\mddefault}{\updefault}{\color[rgb]{0,0,1}$3(\bf{1})$}%
}}}}
\put(3751,-5836){\makebox(0,0)[lb]{\smash{{\SetFigFont{12}{14.4}{\familydefault}{\mddefault}{\updefault}$(11211111)$}}}}
\put(6601,-5686){\makebox(0,0)[lb]{\smash{{\SetFigFont{12}{14.4}{\familydefault}{\mddefault}{\updefault}$(11112111)$}}}}
\put(6601,-2986){\makebox(0,0)[lb]{\smash{{\SetFigFont{12}{14.4}{\familydefault}{\mddefault}{\updefault}$(11111121)$}}}}
\put(6526,-3736){\makebox(0,0)[lb]{\smash{{\SetFigFont{12}{14.4}{\familydefault}{\mddefault}{\updefault}{\color[rgb]{0,0,1}$7(\bf{1})$}%
}}}}
\put(3001,-2686){\makebox(0,0)[lb]{\smash{{\SetFigFont{12}{14.4}{\familydefault}{\mddefault}{\updefault}$(22111111)$}}}}
\put(3451,-4561){\makebox(0,0)[lb]{\smash{{\SetFigFont{12}{14.4}{\familydefault}{\mddefault}{\updefault}$(21121111)$}}}}
\put(3226,-3886){\makebox(0,0)[lb]{\smash{{\SetFigFont{12}{14.4}{\familydefault}{\mddefault}{\updefault}$124(\bf{1})$}}}}
\put(5476,-5086){\makebox(0,0)[lb]{\smash{{\SetFigFont{12}{14.4}{\familydefault}{\mddefault}{\updefault}{\color[rgb]{0,0,1}$6(\bf{1})$}%
}}}}
\thinlines
{\color[rgb]{0,0,1}\put(5401,-4561){\line(-5,-1){2408.654}}
}%
{\color[rgb]{0,.56,0}\put(5401,-4561){\line( 0, 1){1050}}
}%
{\color[rgb]{1,0,0}\put(5401,-4561){\line(-1, 0){825}}
}%
{\color[rgb]{0,0,0}\put(3001,-5086){\line( 1, 6){372.973}}
}%
{\color[rgb]{0,0,0}\put(3676,-4261){\line(-5,-6){682.377}}
}%
{\color[rgb]{0,0,0}\put(3301,-2836){\line( 3,-1){2092.500}}
}%
{\color[rgb]{0,0,0}\put(4576,-4561){\line( 0, 1){1050}}
}%
{\color[rgb]{0,0,0}\put(5401,-3511){\line(-1, 0){825}}
}%
{\color[rgb]{0,0,0}\put(4501,-4561){\line(-3, 1){832.500}}
}%
{\color[rgb]{0,0,0}\put(3676,-4261){\line( 0, 1){975}}
}%
{\color[rgb]{0,0,0}\put(3301,-2836){\line( 1,-1){450}}
}%
{\color[rgb]{0,0,1}\put(5401,-4561){\line( 2,-1){1800}}
}%
{\color[rgb]{0,0,1}\put(7201,-3061){\line(-6,-5){1800}}
}%
{\color[rgb]{0,0,1}\put(6601,-4561){\line(-1, 0){1200}}
}%
{\color[rgb]{0,0,0}\put(4576,-3511){\line(-4, 1){829.412}}
}%
{\color[rgb]{0,0,1}\put(5401,-4561){\line(-5,-4){1207.317}}
}%
{\color[rgb]{0,0,1}\put(5401,-4561){\line( 0,-1){600}}
}%
\put(5251,-4861){\makebox(0,0)[lb]{\smash{{\SetFigFont{12}{14.4}{\familydefault}{\mddefault}{\updefault}{\color[rgb]{0,0,0}$\bf{1}$}%
}}}}
\put(5776,-4486){\makebox(0,0)[lb]{\smash{{\SetFigFont{12}{14.4}{\familydefault}{\mddefault}{\updefault}{\color[rgb]{0,0,1}$8(\bf{1})$}%
}}}}
\put(6601,-4786){\makebox(0,0)[lb]{\smash{{\SetFigFont{12}{14.4}{\familydefault}{\mddefault}{\updefault}$(11111112)$}}}}
\put(5401,-4111){\makebox(0,0)[lb]{\smash{{\SetFigFont{12}{14.4}{\familydefault}{\mddefault}{\updefault}{\color[rgb]{0,.56,0}$2(\bf{1})$}%
}}}}
\put(4276,-3811){\makebox(0,0)[lb]{\smash{{\SetFigFont{12}{14.4}{\familydefault}{\mddefault}{\updefault}$(12121111)$}}}}
\put(3526,-5161){\makebox(0,0)[lb]{\smash{{\SetFigFont{12}{14.4}{\familydefault}{\mddefault}{\updefault}{\color[rgb]{0,0,1}$1(\bf{1})$}%
}}}}
\put(2701,-5386){\makebox(0,0)[lb]{\smash{{\SetFigFont{12}{14.4}{\familydefault}{\mddefault}{\updefault}$(21111111)$}}}}
\put(3976,-4786){\makebox(0,0)[lb]{\smash{{\SetFigFont{12}{14.4}{\familydefault}{\mddefault}{\updefault}$(11121111)$}}}}
{\color[rgb]{0,0,0}\put(5401,-4561){\circle*{120}}
}%
{\color[rgb]{0,0,0}\put(4576,-3511){\circle*{120}}
}%
{\color[rgb]{0,0,0}\put(4576,-4561){\circle*{120}}
}%
{\color[rgb]{0,0,0}\put(5401,-3511){\circle*{120}}
}%
{\color[rgb]{0,0,0}\put(3001,-5086){\circle*{120}}
}%
{\color[rgb]{0,0,0}\put(3676,-4261){\circle*{120}}
}%
{\color[rgb]{0,0,0}\put(3751,-3286){\circle*{120}}
}%
{\color[rgb]{0,0,0}\put(7201,-5461){\circle*{120}}
}%
{\color[rgb]{0,0,0}\put(4201,-5536){\circle*{120}}
}%
{\color[rgb]{0,0,0}\put(7201,-3061){\circle*{120}}
}%
{\color[rgb]{0,0,0}\put(5401,-5161){\circle*{120}}
}%
{\color[rgb]{0,0,0}\put(3301,-2836){\circle*{120}}
}%
{\color[rgb]{0,0,0}\put(6601,-4561){\circle*{120}}
}%
\end{picture}
\caption{Vertex-figure of $2^{\mathrm{3\{3\}3}}$ at $\veci{1}$.}
\label{pow333}
\end{figure}

Our final example is the regular $3$-complex $n^{\mathrm{PG}(2,2)}$, where $\mathrm{PG}(2,2)$ denotes the Fano plane (with $7$ points and $7$ lines), the projective plane over a field of order $2$. In the Fano plane, each pair of vertices is colinear with exactly one additional vertex. Around each vertex of $n^{\mathrm{PG}(2,2)}$ there are then seven facets. Therefore in $n^{\mathrm{PG}(2,2)}$ each pair of edges ($n$-edges) forms a facet with exactly one additional edge. The complex $n^{\mathrm{PG}(2,2)}$ has $n^7$ vertices, $7\cdot n^6$ edges, and $7\cdot n^4$ facets, each isomorphic to $n\wr 3$. The vertex-figures are isomorphic to $\mathrm{PG}(2,2)$, so each vertex lies in seven edges ($n$-edges) and seven facets, and three facets meet at each edge. The automorphism group of $n^{\mathrm{PG}(2,2)}$ is $S_n\wr PGL(3,2) \cong S_n^7 \ltimes PGL(3,2)$.

In the power complex $2^{\mathrm{PG}(2,2)}$ there are 128 vertices, 448 edges ($2$-edges) and 112 facets. Each facet is isomorphic to $2\wr 3$, the $1$-skeleton of the ordinary cube. Figure~\ref{powfano} shows the three facets that meet at an edge.
\begin{figure}[h!]
\centering
\resizebox{120pt}{120pt}{\setlength{\unitlength}{3947sp}%
\begingroup\makeatletter\ifx\SetFigFont\undefined%
\gdef\SetFigFont#1#2#3#4#5{%
  \reset@font\fontsize{#1}{#2pt}%
  \fontfamily{#3}\fontseries{#4}\fontshape{#5}%
  \selectfont}%
\fi\endgroup%
\begin{picture}(2401,2699)(4838,-2873)
\thinlines
{\color[rgb]{0,0,0}\put(7201,-1036){\line(-1, 0){900}}
}%
{\color[rgb]{0,0,0}\put(6001,-2836){\line( 0, 1){825}}
}%
{\color[rgb]{0,0,0}\put(6001,-736){\line(-3,-5){363.971}}
}%
{\color[rgb]{0,0,0}\put(6376,-1411){\line(-3,-5){363.971}}
}%
{\color[rgb]{0,0,0}\put(6001,-736){\line( 3,-5){397.059}}
}%
{\color[rgb]{0,0,0}\put(5626,-1336){\line( 3,-5){397.059}}
}%
{\color[rgb]{0,0,0}\put(4876,-1036){\line( 0, 1){825}}
}%
{\color[rgb]{0,0,0}\put(7201,-1036){\line( 0, 1){825}}
}%
{\color[rgb]{0,0,0}\put(4876,-1036){\line( 3,-5){311.029}}
}%
{\color[rgb]{0,0,0}\put(4876,-211){\line( 3,-5){311.029}}
}%
{\color[rgb]{0,0,0}\put(5701,-211){\line(-1, 0){825}}
}%
{\color[rgb]{0,0,0}\put(5701,-211){\line( 3,-5){311.029}}
}%
{\color[rgb]{0,0,0}\put(5701,-1036){\line( 0, 1){825}}
}%
{\color[rgb]{0,0,0}\put(6001,-736){\line(-1, 0){825}}
}%
{\color[rgb]{0,0,0}\put(5701,-1036){\line(-1, 0){825}}
}%
{\color[rgb]{0,0,0}\put(6301,-211){\line(-3,-5){311.029}}
}%
{\color[rgb]{0,0,0}\put(6301,-1036){\line( 0, 1){825}}
}%
{\color[rgb]{0,0,0}\put(7201,-211){\line(-1, 0){900}}
}%
{\color[rgb]{0,0,0}\put(6901,-736){\line(-1, 0){900}}
}%
{\color[rgb]{0,0,0}\put(7201,-211){\line(-3,-5){311.029}}
}%
{\color[rgb]{0,0,0}\put(7201,-1036){\line(-3,-5){311.029}}
}%
{\color[rgb]{1,0,0}\put(5701,-1036){\line( 3,-5){311.029}}
}%
{\color[rgb]{1,0,0}\put(6001,-1561){\line( 0, 1){825}}
}%
{\color[rgb]{1,0,0}\put(6301,-1036){\line(-3,-5){311.029}}
}%
{\color[rgb]{1,0,0}\put(6901,-1561){\line(-1, 0){900}}
}%
{\color[rgb]{1,0,0}\put(6001,-1561){\line( 3,-5){397.059}}
}%
{\color[rgb]{1,0,0}\put(6001,-1561){\line(-1, 0){825}}
}%
{\color[rgb]{0,0,0}\put(5176,-1561){\line( 0, 1){825}}
}%
{\color[rgb]{0,0,0}\put(6901,-1561){\line( 0, 1){825}}
}%
{\color[rgb]{0,0,0}\put(6376,-2236){\line( 0, 1){825}}
}%
{\color[rgb]{0,0,0}\put(5626,-2161){\line( 0, 1){825}}
}%
{\color[rgb]{1,0,0}\put(6001,-1561){\line(-3,-5){363.971}}
}%
{\color[rgb]{0,0,0}\put(5626,-2161){\line( 3,-5){397.059}}
}%
{\color[rgb]{0,0,0}\put(6369,-2232){\line(-3,-5){363.971}}
}%
{\color[rgb]{0,0,0}\put(6001,-736){\circle*{60}}
}%
{\color[rgb]{0,0,0}\put(6901,-1561){\circle*{60}}
}%
{\color[rgb]{0,0,0}\put(5176,-1561){\circle*{60}}
}%
{\color[rgb]{0,0,0}\put(5701,-1036){\circle*{60}}
}%
{\color[rgb]{0,0,0}\put(6301,-1036){\circle*{60}}
}%
{\color[rgb]{0,0,0}\put(5626,-2161){\circle*{60}}
}%
{\color[rgb]{0,0,0}\put(6376,-2236){\circle*{60}}
}%
{\color[rgb]{0,0,0}\put(6001,-2836){\circle*{60}}
}%
{\color[rgb]{0,0,0}\put(6001,-2011){\circle*{60}}
}%
{\color[rgb]{0,0,0}\put(6376,-1411){\circle*{60}}
}%
{\color[rgb]{0,0,0}\put(5626,-1336){\circle*{60}}
}%
{\color[rgb]{0,0,0}\put(4876,-1036){\circle*{60}}
}%
{\color[rgb]{0,0,0}\put(4876,-211){\circle*{60}}
}%
{\color[rgb]{0,0,0}\put(5176,-736){\circle*{60}}
}%
{\color[rgb]{0,0,0}\put(5701,-211){\circle*{60}}
}%
{\color[rgb]{0,0,0}\put(6301,-211){\circle*{60}}
}%
{\color[rgb]{0,0,0}\put(6901,-736){\circle*{60}}
}%
{\color[rgb]{0,0,0}\put(7201,-1036){\circle*{60}}
}%
{\color[rgb]{0,0,0}\put(7201,-211){\circle*{60}}
}%
{\color[rgb]{0,0,0}\put(6001,-1561){\circle*{60}}
}%
\end{picture}
\caption{Three facets of $2^{\mathrm{PG}(2,2)}$ with a common edge. Each vertex lies in seven edges.}
\label{powfano}
\end{figure}

The power complexes $2^{\K}$ obtained from abstract polytopes $\K$ with simplex facets, themselves have cubical facets and often give cubical complexes with interesting topological properties; for example, see \cite{bks,kusc,kuet}. The cubical regular $5$-toroid $2^{\{3,3,4\}}$ described above is of this kind. If $\K$ is a regular or chiral torus map of type $\{3,6\}_{(p,q)}$, then the cubical complex is a 3-dimensional pseudomanifold in which the vertices are ``toroidal singularities" with a neighborhood given by a ``cone over a 2-torus". These complexes are quotients of the regular tessellation $\{4,3,6\}$ in hyperbolic 3-space.

\section{Recovering $n^{\mathcal{K}}$ from a small flag-transitive subgroup}
\label{nktwist}

Let $\mathcal{K}$ be a finite regular incidence complex of rank $k \geq 1$ with $v$ vertices, vertex set $V$, and base flag  $\{F_{-1}, F_0, F_1, \dotsc, F_k\}$. Let the automorphism group of $\mathcal{K}$ relative to the base flag be given by  $\Gamma(\K) = \langle R_0, R_1, \dotsc, R_{k-1}\rangle$ (since $k\geq 1$ we can suppress $R_{-1}$ and $R_k$). We begin by constructing a regular $(k+1)$-complex with vertex-figure isomorphic to $\mathcal{K}$, and then establish isomorphism with the power complex $n^{\mathcal{K}}$.

Consider the direct product $W$ of $v$ copies of the cyclic group $C_n$ of order $n$, each $C_n$ with a generator $\sigma_F$ indexed by a vertex $F$ of $\mathcal{K}$. Thus
\[W \,=\, \langle \sigma_F | F\in V\rangle  \,=\, \bigotimes_{F\in V}\langle \sigma_F \rangle \,=\, C_n^v.\]
We view $W$ as a subgroup of the semi-direct product $\Gamma:= W \ltimes \Gamma(\K) = C_n\wr \Gamma(\K)$, where the conjugation action of $\Gamma(\K)$ on $W$ is determined by
\[\varphi^{-1}\sigma_F\varphi = \sigma_{F\varphi}\quad (F\in V, \varphi \in \Gamma(\K)).\]

Define the subgroups $\widehat{R}_0, \widehat{R}_1, \dotsc, \widehat{R}_k$ of $\Gamma$ by
\begin{equation}
  \label{eq:twist-Ri}
\widehat{R}_i =
\begin{cases}
  \langle \sigma_{F_0} \rangle &\mbox{if } i = 0,\\
  R_{i-1} &\mbox{if }1 \leq i \leq k.
\end{cases}
\end{equation}
Since $\Gamma(\K)$ acts vertex-transitively on $\mathcal{K}$, each generator $\sigma_F$ of $W$ lies in $\langle \widehat{R}_0, \dotsc, \widehat{R}_k \rangle$; in fact, if $F\in V$ and $F = F_0\varphi$ for some $\varphi \in \Gamma(\K)$, then $\sigma_F = \varphi^{-1}\sigma_{F_0}\varphi$. It follows that
\begin{equation}
\nonumber
\Gamma = \langle \widehat{R}_0, \widehat{R}_1, \dotsc, \widehat{R}_k \rangle.
\end{equation}
Moreover, if $i \geq 2$ and $\varphi \in \widehat{R}_i = R_{i-1}$, then $F_0\varphi = F_0$ and hence $\sigma_{F_0}\varphi = \varphi\sigma_{F_0}$; therefore, $\widehat{R}_0\widehat{R}_i = \widehat{R}_i\widehat{R}_0$ (with commutation occurring even at the level of elements).

We need to verify the intersection property \eqref{intprop} for $\Gamma$, that is $\Gamma_{I} \cap \Gamma_{J} = \Gamma_{I\cap J}$ for $I, J\subset\{0, \dotsc k\}$. If $0\notin I, J$ then $\Gamma_I$ and $\Gamma_J$ both lie in $\Gamma(\K)$, so we can directly appeal to the intersection property of $\Gamma(\K)$. If $0\in I,J$, write 
$I = \{0\} \cup I'$ and $J=\{0\}\cup J'$ with $I',J'\subseteq \{1,\ldots,k\}$. Then
\begin{align*}
\Gamma_{I}\cap \Gamma_{J} & 
= ((\bigotimes_{\varphi \in \Gamma_{I'}} \langle \sigma_{F_0\varphi} \rangle) \ltimes \Gamma_{I'}) 
\;\cap\; ((\bigotimes_{\varphi\in \Gamma_{J'}} \langle \sigma_{F_0\varphi} \rangle) \ltimes \Gamma_{J'})\\
&= (\bigotimes_{\varphi\in\Gamma_{I'}\cap \Gamma_{J'}}  \langle\sigma_{F_0\varphi}\rangle) \ltimes ( \Gamma_{I'} \cap \Gamma_{J'})
\;=\; (\bigotimes_{\varphi\in\Gamma_{I'\cap J'}} \langle \sigma_{F_0\varphi} \rangle) \ltimes \Gamma_{I'\cap J'}
\;=\; \Gamma_{I\cap J},
\end{align*}
by the semi-direct product structure of the groups and the intersection property of $\Gamma(\K)$. This settles the intersection property in the case  $0\in I, J$.  If only one set, $I$ (say), contains~$0$, and again $I=\{0\}\cup I'$, then 
\[ \Gamma_I\cap \Gamma_J 
\;=\; ((\bigotimes_{\varphi\in \Gamma_{I'}} \langle \sigma_{F_0\varphi} \rangle) \ltimes \Gamma_{I'}) \,\cap\, \Gamma_J
\;=\; \Gamma_{I'}\cap \Gamma_J = \Gamma_{I'\cap J}
\;=\; \Gamma_{I\cap J}, \]
again by the semi-direct product structure and the intersection property of $\Gamma(\K)$.

Thus the group $\Gamma = C_n\wr\Gamma(\K)$ is a flag-transitive subgroup of the automorphism group of a regular incidence complex of rank $k+1$. In fact, by \cite[Thm. 3]{kom2} this complex is necessarily isomorphic to $n^\K$. In fact, the automorphism group $\Gamma(n^{\mathcal{K}})$ of $n^\K$ has a flag-transitive subgroup isomorphic to $C_n\wr \Gamma(\K)$ (see Lemma~\ref{transsub}), and this subgroup acts on $n^\K$ in just the same manner as $\Gamma$ does on the regular $(k+1)$-complex associated with $\Gamma$.  

Thus $n^{\mathcal{K}}$ can be constructed from a small flag-transitive subgroup, $C_n\wr \Gamma(\K)$, of the full automorphism group $\Gamma(n^{\mathcal{K}}) = S_n\wr\Gamma(\K)$ by a twisting operation. 

In the above the cyclic group $C_n$ can be replaced by any transitive subgroup $U$ of $S_n$, and then $n^\K$ can be recovered from the flag-transitive subgroup $U\wr \Gamma(\K)$ of $S_n\wr\Gamma(\K)$. The choice of $C_n$ represents the smallest possible case. 

\section{Generalized power complexes and twisting}
\label{twist}

The results in this section are inspired by the twisting construction for the regular polytopes $\mathcal{L}^{\mathcal{K}, \mathcal{G}}$ described in \cite[Ch.~8B]{arp}.  This construction proceeds from a suitable Coxeter group $W$ on which $\Gamma(\K)$ acts as a group of group automorphisms, and then extends $W$ by $\Gamma(\K)$ to find $\mathcal{L}^{\mathcal{K},\mathcal{G}}$.  The Coxeter diagram for $W$ depends on the polytopes $\mathcal{K}$ and $\mathcal{L}$ and contains $\mathcal{G}$ as an induced subdiagram.  In particular, $\Gamma(\mathcal{L}^{\mathcal{K}, \mathcal{G}}) \cong W\ltimes \Gamma(\K)$.

In the context of arbitrary regular incidence complexes, the applicability of a similar technique is severely constrained by the lack of readily available classes of groups $W$ on which the whole group of a regular complex $\mathcal{K}$ can act in a suitable way.  Here we limit ourselves to two special cases for regular complexes $\mathcal{K}$ and~$\mathcal{L}$:
\begin{itemize}
\item $\mathcal{L}$ is a universal regular polytope of rank $d\geq 1$ (and then $W$ is a Coxeter group),
\item $\mathcal{L}$ has rank 1 (and $W$ is the direct product of cyclic groups).
\end{itemize}
In these two cases the twisting operation carries over and gives regular complexes which we denote again by $\mathcal{L}^{\mathcal{K}, \mathcal{G}}$. The second case was already investigated in Section~\ref{nktwist}; in fact if $\mathcal{L}$ is an $n$-edge $\{\}_n$ (and $\mathcal{G}$ is the trivial diagram), then $\mathcal{L}^{\mathcal{K}, \mathcal{G}} \cong n^{\mathcal{K}}$. In this section we study the first case.

Throughout this section, $\mathcal{L}$ is a universal regular $d$-polytope of type $\{q_1, \dotsc, q_{d-1}\}$ (see \cite[Ch. 3D]{arp}). Here ``universal'' means that $\mathcal{L}$ covers every regular $d$-polytope of the same Schl\"afli type. The automorphism group $\Gamma(\mathcal{L})$ of $\mathcal{L}$ is the Coxeter group with a string diagram on $d$ nodes, in which the branches are labeled $q_1, \dotsc, q_{d-1}$. We let $\Gamma(\mathcal{L}) = \langle \rho_0, \dotsc, \rho_{d-1}\rangle$, where $\rho_0, \dotsc, \rho_{d-1}$ are the distinguished generators.  

Now let $\mathcal{K}$ be a finite regular complex of rank $k\geq 1$ with automorphism group $\Gamma(\K) =\langle R_0,\dotsc,R_{k-1}\rangle$ (since $k \geq 1$ we can ignore mentioning $R_{-1}$ and $R_k$).  Let $\mathcal{G}$ be a Coxeter diagram in which the nodes are indexed by the vertices of $\mathcal{K}$; that is, the node set $V(\mathcal{G})$ of $\mathcal{G}$ is the vertex set $V(\mathcal{K})$ of $\mathcal{K}$. Now suppose $\Gamma(\K)$ acts node-transitively on $\mathcal{G}$ as a group of diagram symmetries, the vertex stabilizer-subgroup $\langle R_1, \dotsc, R_{k-1}\rangle$ of $\Gamma(\K)$ fixes the node~$F_0$~(say) of $\mathcal{G}$ corresponding to the base vertex of $\mathcal{K}$ (it may fix more than one node), and the action of $\Gamma(\K)$ on $\mathcal{G}$ respects the following intersection property for subsets of nodes of $\mathcal{G}$ defined in terms of the generating subgroups $R_0, \dotsc, R_{k-1}$ of $\Gamma(\K)$. The latter means that if $V(\mathcal{G}, I)$ denotes the set of images of $F_0$ under the subgroup $\langle R_i|\; i\in I\rangle$ of $\Gamma(\K)$ for $I\subseteq \{0, \dotsc, k-1\}$, then 
\begin{equation}
\label{nodeint}
V(\mathcal{G}, I)\cap V(\mathcal{G}, J) = V(\mathcal{G},I\cap J)\quad(I, J\subseteq \{0, \dotsc, k-1\}).
\end{equation}
In applications, the action of $\Gamma(\K)$ on the nodes of $\mathcal{G}$ is just the standard action of $\Gamma(\K)$ on the vertices of $\mathcal{K}$. In this case the intersection condition \eqref{nodeint} is satisfied if $\mathcal{K}$ is vertex-describable. We will make this assumption from now on.

Given $\mathcal{L}$ and $\mathcal{K}$ as above we now merge the string Coxeter diagram of $\mathcal{L}$ with the Coxeter diagram $\mathcal{G}$ to obtain a larger diagram $\mathcal{D}$, which also admits an action of $\Gamma(\K)$ as a group of diagram symmetries.  More precisely, we extend the diagram for $\mathcal{L}$ by the diagram $\mathcal{G}$ to a diagram $\mathcal{D}$, by identifying the node $F_0$ of $\mathcal{G}$ with the  node $d-1$  of the diagram of $\mathcal{L}$ and adding, for each $G\in V(\mathcal{G})$, a  node labeled $G$ and a branch marked $q_{d-1}$ between nodes ${d-2}$ and $G$ (see Figure~\ref{twistdia}). In addition, any branches and labels from $\mathcal{G}$ are included in $\mathcal{D}$ (Figure~\ref{twistdia} suppresses any such branches).  Then the node set of $\mathcal{D}$ is
\[V(\mathcal{D}) :=V(\mathcal{G}) \cup \{0,\dotsc, d-2\},\]
where here the node $d-1$ of $\mathcal{D}$ is viewed as lying in $V(\mathcal{G})$.

\begin{figure}[ht!]
\centering
\resizebox{260pt}{110pt}{\setlength{\unitlength}{3947sp}%
\begingroup\makeatletter\ifx\SetFigFont\undefined%
\gdef\SetFigFont#1#2#3#4#5{%
  \reset@font\fontsize{#1}{#2pt}%
  \fontfamily{#3}\fontseries{#4}\fontshape{#5}%
  \selectfont}%
\fi\endgroup%
\begin{picture}(6255,2112)(1561,-3439)
\put(6826,-2011){\makebox(0,0)[lb]{\smash{{\SetFigFont{12}{14.4}{\familydefault}{\mddefault}{\updefault}{\color[rgb]{0,0,0}$q_{d-1}$}%
}}}}
{\color[rgb]{0,0,0}\thinlines
\put(7651,-1786){\circle{30}}
}%
{\color[rgb]{0,0,0}\put(7651,-1636){\circle{30}}
}%
{\color[rgb]{0,0,0}\put(6451,-3061){\circle{120}}
}%
{\color[rgb]{0,0,0}\put(7651,-3061){\circle{120}}
}%
{\color[rgb]{0,0,0}\put(5551,-3061){\circle{30}}
}%
{\color[rgb]{0,0,0}\put(5701,-3061){\circle{30}}
}%
{\color[rgb]{0,0,0}\put(5851,-3061){\circle{30}}
}%
{\color[rgb]{0,0,0}\put(5251,-3061){\circle{120}}
}%
{\color[rgb]{0,0,0}\put(1651,-3061){\circle{120}}
}%
{\color[rgb]{0,0,0}\put(2851,-3061){\circle{120}}
}%
{\color[rgb]{0,0,0}\put(4051,-3061){\circle{120}}
}%
{\color[rgb]{0,0,0}\put(7651,-1411){\circle{120}}
}%
{\color[rgb]{0,0,0}\put(7651,-2386){\circle{120}}
}%
{\color[rgb]{0,0,0}\put(2851,-3061){\line( 1, 0){1200}}
}%
{\color[rgb]{0,0,0}\put(4051,-3061){\line( 1, 0){1200}}
}%
{\color[rgb]{0,0,0}\put(6451,-3061){\line( 1, 0){1200}}
}%
{\color[rgb]{0,0,0}\put(1651,-3061){\line( 1, 0){1200}}
}%
{\color[rgb]{0,0,0}\put(6451,-3061){\line( 5, 3){1180.147}}
}%
{\color[rgb]{0,0,0}\put(6451,-3061){\line( 3, 4){1224}}
}%
\put(2026,-2986){\makebox(0,0)[lb]{\smash{{\SetFigFont{12}{14.4}{\familydefault}{\mddefault}{\updefault}{\color[rgb]{0,0,0}$q_1$}%
}}}}
\put(3301,-2986){\makebox(0,0)[lb]{\smash{{\SetFigFont{12}{14.4}{\familydefault}{\mddefault}{\updefault}{\color[rgb]{0,0,0}$q_2$}%
}}}}
\put(4501,-2986){\makebox(0,0)[lb]{\smash{{\SetFigFont{12}{14.4}{\familydefault}{\mddefault}{\updefault}{\color[rgb]{0,0,0}$q_3$}%
}}}}
\put(2776,-3361){\makebox(0,0)[lb]{\smash{{\SetFigFont{12}{14.4}{\familydefault}{\mddefault}{\updefault}{\color[rgb]{0,0,0}$1$}%
}}}}
\put(3976,-3361){\makebox(0,0)[lb]{\smash{{\SetFigFont{12}{14.4}{\familydefault}{\mddefault}{\updefault}{\color[rgb]{0,0,0}$2$}%
}}}}
\put(5176,-3361){\makebox(0,0)[lb]{\smash{{\SetFigFont{12}{14.4}{\familydefault}{\mddefault}{\updefault}{\color[rgb]{0,0,0}$3$}%
}}}}
\put(6301,-3361){\makebox(0,0)[lb]{\smash{{\SetFigFont{12}{14.4}{\familydefault}{\mddefault}{\updefault}{\color[rgb]{0,0,0}$d-2$}%
}}}}
\put(7201,-3361){\makebox(0,0)[lb]{\smash{{\SetFigFont{12}{14.4}{\familydefault}{\mddefault}{\updefault}{\color[rgb]{0,0,0}$d-1=F_0$}%
}}}}
\put(1576,-3361){\makebox(0,0)[lb]{\smash{{\SetFigFont{12}{14.4}{\familydefault}{\mddefault}{\updefault}{\color[rgb]{0,0,0}$0$}%
}}}}
\put(6901,-2986){\makebox(0,0)[lb]{\smash{{\SetFigFont{12}{14.4}{\familydefault}{\mddefault}{\updefault}{\color[rgb]{0,0,0}$q_{d-1}$}%
}}}}
\put(7801,-1486){\makebox(0,0)[lb]{\smash{{\SetFigFont{12}{14.4}{\familydefault}{\mddefault}{\updefault}{\color[rgb]{0,0,0}$G$}%
}}}}
\put(7801,-2461){\makebox(0,0)[lb]{\smash{{\SetFigFont{12}{14.4}{\familydefault}{\mddefault}{\updefault}{\color[rgb]{0,0,0}$G$}%
}}}}
\put(6826,-2611){\makebox(0,0)[lb]{\smash{{\SetFigFont{12}{14.4}{\familydefault}{\mddefault}{\updefault}{\color[rgb]{0,0,0}$q_{d-1}$}%
}}}}
{\color[rgb]{0,0,0}\put(7651,-1711){\circle{30}}
}%
\end{picture}
\caption{The Coxeter diagram $\mathcal{D}$.}
\label{twistdia}
\end{figure}

Throughout the remainder of this discussion we focus on the case when $\mathcal{G}$ is the {\em trivial\/} diagram, with nodes the vertices of $\mathcal{K}$ and without any branches. (The method works in more general contexts but we restrict ourselves to this case.) In other words, the Coxeter group with diagram $\mathcal{G}$ is $C_2^v$, where again $v$ is the number of vertices of $\mathcal{K}$. We also simplify notation and write $\mathcal{L}^\K$ in place of $\mathcal{L}^{\K,\mathcal{G}}$ (when $\mathcal{G}$ is the trivial diagram).

The group $\Gamma(\K)$ acts on $\mathcal{D}$ as a group of diagram symmetries and therefore also on the Coxeter group
\[ W  := \langle \sigma_H |\; H\in V(\mathcal{D})\rangle\]
defined by $\mathcal{D}$ as a group of group automorphisms permuting the generators and fixing $\sigma_0, \dotsc, \sigma_{d-2}$. Note that $W$ contains $\Gamma(\mathcal{L})$ as a subgroup.

We generate the desired $(d+k)$-incidence complex $\mathcal{L}^{\mathcal{K}}$ from the subgroup 
\[\Gamma := \langle \widehat{R}_0,\widehat{R}_1,\dotsc, \widehat{R}_{d+k-1}\rangle\]
of $W\ltimes \Gamma(\K)$, where
\begin{equation}
\label{gengamma}
\widehat{R}_i := 
\begin{cases}
\langle \rho_i\rangle & \mbox{if } i = 0,\dotsc, d-1\\
R_{i-d} & \mbox{if } i = d, d+1, \dotsc, d+k-1.
\end{cases}
\end{equation}

\begin{theorem}
\label{twistthm}
The group $\Gamma$ is a flag-transitive subgroup of the automorphism group of a regular incidence complex of rank $d+k$, denoted $\mathcal{L}^{\mathcal{K}}$. Moreover, $\Gamma = W\ltimes \Gamma(\K)$.
\end{theorem}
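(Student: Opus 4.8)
The plan is to establish the two assertions by verifying the recognition criterion for regular incidence complexes quoted earlier in the excerpt: if a group $\Gamma$ has a system of generating subgroups $\widehat{R}_0,\dotsc,\widehat{R}_{d+k-1}$ satisfying the commutation property \eqref{commu} and the intersection property \eqref{intprop}, then $\Gamma$ is flag-transitive on a regular complex of the appropriate rank. The second assertion, $\Gamma = W\ltimes\Gamma(\K)$, is really the crux and should be proved first, since once we know $\Gamma$ equals the full semi-direct product the structural identifications needed for the intersection property become transparent. I would prove $\Gamma = W\ltimes\Gamma(\K)$ by showing that the generating set \eqref{gengamma} already generates everything: the subgroups $\widehat{R}_d,\dotsc,\widehat{R}_{d+k-1}$ are precisely $R_0,\dotsc,R_{k-1}$, so $\langle \widehat{R}_d,\dotsc,\widehat{R}_{d+k-1}\rangle = \Gamma(\K)$; the subgroups $\widehat{R}_0,\dotsc,\widehat{R}_{d-1}$ generate $\Gamma(\mathcal{L}) = \langle\rho_0,\dotsc,\rho_{d-1}\rangle$, which sits inside $W$ and in particular contains the generator $\sigma_{d-1} = \sigma_{F_0}$ of $W$ attached to the base-vertex node. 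Then, exactly as in the rank-one case of Section~\ref{nktwist}, the node-transitivity of $\Gamma(\K)$ on $V(\mathcal{G})$ produces every remaining generator $\sigma_G$ of $W$ by conjugation: if $G = F_0\varphi$ with $\varphi\in\Gamma(\K)$, then $\sigma_G = \varphi^{-1}\sigma_{F_0}\varphi \in \Gamma$. Hence $W\subseteq\Gamma$ and $\Gamma(\K)\subseteq\Gamma$, giving $\Gamma = W\ltimes\Gamma(\K)$.

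Next I would verify the commutation relations \eqref{commu}, i.e.\ $\widehat{R}_i\widehat{R}_j = \widehat{R}_j\widehat{R}_i$ whenever $|i-j|\geq 2$. These split into three regimes. For $0\leq i<j\leq d-1$ the relation is just the string-diagram commutativity of the Coxeter group $\Gamma(\mathcal{L})$. For $d\leq i<j\leq d+k-1$ it is the commutation property of the distinguished subgroups of $\Gamma(\K)$. The genuinely new cases are the mixed ones, $i\leq d-1 < d \leq j$: here I would argue that for $i\leq d-2$ the generator $\rho_i$ lies in $\{\sigma_0,\dotsc,\sigma_{d-2}\}$, which $\Gamma(\K)$ fixes pointwise as diagram symmetries, so $\rho_i$ commutes elementwise with all of $\Gamma(\K)$ and a fortiori with $R_{j-d}$; and for the boundary index $i=d-1$, paired with $j\geq d+1$ (so that $j-d\geq 1$), any $\varphi\in R_{j-d}$ fixes the base vertex $F_0$, whence $\varphi$ fixes the node $\sigma_{F_0} = \sigma_{d-1}$, giving $\sigma_{d-1}\varphi = \varphi\sigma_{d-1}$ — commutation even at the element level, just as in Section~\ref{nktwist}. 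This is where the hypothesis that the vertex-stabilizer $\langle R_1,\dotsc,R_{k-1}\rangle$ fixes the node $F_0$ is used.

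The main obstacle, and the step I would spend the most care on, is the intersection property \eqref{intprop}, $\Gamma_I\cap\Gamma_J = \Gamma_{I\cap J}$ for $I,J\subseteq\{0,\dotsc,d+k-1\}$. I would exploit the semi-direct product structure $\Gamma = W\ltimes\Gamma(\K)$ to reduce an arbitrary index set to its ``$\mathcal{L}$-part'' $I\cap\{0,\dotsc,d-1\}$ and its ``$\K$-part'' $I\cap\{d,\dotsc,d+k-1\}$, and then compute $\Gamma_I$ as a semi-direct product of a suitable subgroup of $W$ by the corresponding subgroup of $\Gamma(\K)$, in direct analogy with the displayed computation in Section~\ref{nktwist}. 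Concretely, $\Gamma_I$ should factor so that its $W$-component is the subgroup of $W$ generated by the reflections indexed by the nodes reachable from the chosen generators, and the key point is that the intersection of two such $W$-subgroups is governed by the node-intersection property \eqref{nodeor:$V(\mathcal{G},I)\cap V(\mathcal{G},J) = V(\mathcal{G},I\cap J)$}, which holds because $\K$ is vertex-describable. The delicate part is the behaviour of the ``seam'' node $d-1$, which is shared between the string diagram of $\mathcal{L}$ and the diagram $\mathcal{G}$, so I would check separately the cases distinguished by whether the pivotal index $d-1$ belongs to $I$, to $J$, to both, or to neither, confirming that the Coxeter-group intersection property for $\Gamma(\mathcal{L})$ (from universality, so that $\Gamma(\mathcal{L})$ is a genuine string C-group) and the node-intersection property \eqref{nodeint} combine to yield $\Gamma_{I\cap J}$ in every case. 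With \eqref{genga}, \eqref{commu} and \eqref{intprop} in hand, the quoted recognition theorem immediately delivers the regular complex $\mathcal{L}^{\mathcal{K}}$ of rank $d+k$ and identifies $\Gamma$ as a flag-transitive subgroup of its automorphism group.
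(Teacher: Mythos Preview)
Your proposal is correct and follows essentially the same approach as the paper's proof: first establish $\Gamma = W\ltimes\Gamma(\K)$ by conjugating $\sigma_{F_0}=\rho_{d-1}$ under the vertex-transitive action of $\Gamma(\K)$, then verify the commutation relations (you are in fact more explicit than the paper here, treating the boundary case $i=d-1$, $j\geq d+1$ separately), and finally prove the intersection property by splitting each index set into its $\mathcal{L}$-part and $\K$-part, writing $\Gamma_I$ as a semi-direct product of a parabolic subgroup of $W$ by a subgroup of $\Gamma(\K)$, and invoking the intersection properties of $W$, $\Gamma(\K)$, and \eqref{nodeint}. The paper's argument is terser in the intersection step---it bundles the role of \eqref{nodeint} into the phrase ``the intersection property of $W$''---but the underlying computation is the same as yours.
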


\begin{proof}
Since $\Gamma(\K) = \langle R_0,\dotsc, R_{d-1}\rangle$ acts transitively on the vertices of $\mathcal{K}$, each generator $\sigma_H$ of $W$ lies in $\Gamma$.  In fact, if  $F\in V(\mathcal{K})$ there is a $\varphi\in \Gamma(\K)$ such that $F=F_0\varphi$, and then 
\[\sigma_F = \varphi^{-1}\sigma_{F_0}\varphi = \varphi^{-1}\rho_{d-1}\varphi .\] 
It then follows that $\Gamma = W\ltimes \Gamma(\K)$.

Since $\Gamma(\K)$ fixes the nodes $0, \dotsc, d-2$ of $\mathcal{D}$, it centralizes the generators $\rho_0, \dotsc, \rho_{d-2}$ of $\Gamma$ and therefore $\langle \rho_i\rangle R_j = R_j\langle \rho_i\rangle$ for $0 \leq i \leq d-2$, $0 \leq j\leq k-1$.  Thus $\widehat{R}_i\widehat{R}_j=\widehat{R}_j\widehat{R}_i$ if $|i-j|\geq 2$.

To see that $\Gamma$ satisfies the intersection property \eqref{intprop}, let $I, J \subseteq \{0, \dotsc, d+k-1\}$. We need to show that  $\Gamma_{I\cap J}= \Gamma_I \cap \Gamma_J$. Write $I=I_1 \cup I_2$ with $I_1\subseteq \{0,\dotsc, d-1\}$ and $I_2\subseteq \{d, d+1, \dotsc, d+k-1\}$.  Then
\begin{align}
\label{twistintprop}
\Gamma_I &\,=\, \langle \widehat{R}_i|\;i\in I\rangle  
\,=\,  \langle \rho_i, R_{j-d} |\; i\in I_1\mbox{, }j\in I_2\rangle\nonumber\\[.01in]
&\,=\,
\begin{cases}
\langle \rho_i|\; i\in I_1\rangle\, \langle R_{j-d}|\,j\in I_2\rangle &\mbox{if }d-1\not\in  I_1\\[.01in]
\langle \rho_i, \sigma_H|\; i\in I_1 , H\in V(\mathcal{G}, I_2 -d)\rangle\,\langle R_{j-d}|\; j\in I_2\rangle &\mbox{if }d-1\in I_1,
\end{cases}
\end{align}
where here $I_{2}-d:=\{j-d\mid j\in I_2\}$. The products in (\ref{twistintprop}) are semi-direct products, and when $d-1\not\in I_1$ the product is even direct.  In any case, the first factor is a subgroup of $W$ and the second factor is a subgroup of  $\Gamma(\K)$. For $J$ we can similarly define $J_1$ and $J_2$ so that $J=J_1\cup J_2$. Clearly, 
\[I \cap J = (I_1\cup I_2)\cap (J_1\cup J_2) = (I_1\cap J_1)\cup(I_2\cap J_2).\]
Now the intersection property of $W$ and $\Gamma(\K)$ gives  
\[  \Gamma_{I_1\cap J_1} = \Gamma_{I_1}\cap \Gamma_{J_1},\quad
\Gamma_{I_2\cap J_2} = \Gamma_{I_2}\cap \Gamma_{J_2}, \] 
and the semi-direct product structure implies that $\Gamma_{I\cap J} = \Gamma_I \cap \Gamma_J$. Hence $\Gamma$ satisfies the intersection property.

Thus $\Gamma$ has all properties required of a flag-transitive subgroup of a regular $(d+k)$-complex. Then by \cite{kom2} we know that indeed there is such a complex admitting $\Gamma$ as flag-transitive group, and we denote it by $\mathcal{L}^\K$.
\end{proof}

Note that if $\{F_{-1},F_0,\dotsc, F_k\}$ is the base flag of $\mathcal{K}$, then for $d\leq j\leq d+k-1$ we have
\begin{align}
\label{8B7}
\langle \widehat{R}_0,\dotsc, \widehat{R}_j\rangle \,=\, 
\langle \rho_i, \sigma_H|\; 0\leq i \leq d-1, H \in V(\mathcal{G},\{0,\dotsc, j-d\})\rangle 
\,\langle R_0,\dotsc, R_{j-d}\rangle ,
\end{align}
which again a semi-direct product; here
\[V(\mathcal{G},\{0,\dotsc, j-d\}) = \{H\in V(\mathcal{K})|\; H\leq F_{j-d+1}\}.\]

The following theorem describes the structure of the faces and co-faces of the regular complex $\mathcal{L}^\mathcal{K}$ derived from $\Gamma$.

\begin{theorem}
\label{subcomplex}
Let $\mathcal{L}$ be a universal regular $d$-polytope $\{q_1, \dotsc, q_{d-1}\}$ and $\mathcal{K}$ a finite regular $k$-incidence complex.  For $1\leq i\leq k$ let $\mathcal{K}_i$ denote the isomorphism type of the $i$-faces of $\mathcal{K}$. Then the $(d+i)$-faces of $\mathcal{L}^{\mathcal{K}}$ are isomorphic to $\mathcal{L}^{\mathcal{K}_i}$.
Additionally, if $0\leq i\leq d-2$  then the co-faces at $i$-faces of  $\mathcal{L}^{\mathcal{K}}$  are isomorphic to $\mathcal{L}_i^{\mathcal{K}}$, where $\mathcal{L}_i$ is the universal regular polytope $\{q_{i+2}, \dotsc, q_{d-1}\}$, the co-face at an $i$-face of $\mathcal{L}$.
\end{theorem}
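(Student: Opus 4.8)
The plan is to identify each face and co-face of $\mathcal{L}^{\mathcal{K}}$ as a regular complex reconstructed, in the sense of \cite{kom2} (cf.\ \cite[Ch.~2C]{arp}), from an interval of the distinguished generating subgroups, and then to match that group datum with the datum that Theorem~\ref{twistthm} attaches to $\mathcal{L}^{\mathcal{K}_i}$ or to $\mathcal{L}_i^{\mathcal{K}}$. Since $\Gamma$ is flag-transitive, all $(d+i)$-faces of $\mathcal{L}^{\mathcal{K}}$ are mutually isomorphic and all co-faces at $i$-faces are mutually isomorphic, so it suffices to treat the base face $F_{d+i}/F_{-1}$ and the base co-face $F_{d+k}/F_i$. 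For these, the standard description of sections of a regular complex gives that $F_{d+i}/F_{-1}$ and $F_{d+k}/F_i$ are the regular complexes reconstructed, respectively, from $\langle \widehat{R}_0,\dots,\widehat{R}_{d+i-1}\rangle$ and $\langle \widehat{R}_{i+1},\dots,\widehat{R}_{d+k-1}\rangle$ with the corresponding sub-families of generating subgroups; so the whole task reduces to understanding these two subgroups of $\Gamma=W\ltimes\Gamma(\mathcal{K})$.

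For the first assertion I would specialize \eqref{8B7} to $j=d+i-1$. This already exhibits $\langle \widehat{R}_0,\dots,\widehat{R}_{d+i-1}\rangle$ as a semi-direct product whose $\Gamma(\mathcal{K})$-factor is $\langle R_0,\dots,R_{i-1}\rangle=\Gamma(\mathcal{K}_i)$, the flag-transitive group of the base $i$-face $\mathcal{K}_i$, and whose $W$-factor is generated by $\rho_0,\dots,\rho_{d-1}$ together with the $\sigma_H$ for $H\in V(\mathcal{G},\{0,\dots,i-1\})=\{H\in V(\mathcal{K})\mid H\le F_i\}=V(\mathcal{K}_i)$. Being generated by a subset of the standard involutions of the Coxeter group $W$, this $W$-factor is the parabolic subgroup carried by the induced subdiagram of $\mathcal{D}$ on the nodes $\{0,\dots,d-2\}\cup V(\mathcal{K}_i)$ (with $d-1=F_0\in V(\mathcal{K}_i)$). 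That induced subdiagram is precisely the diagram used to build $\mathcal{L}^{\mathcal{K}_i}$, and so the semi-direct product, equipped with the generating subgroups $\langle\rho_0\rangle,\dots,\langle\rho_{d-1}\rangle,R_0,\dots,R_{i-1}$, is exactly the group $\Gamma$ of Theorem~\ref{twistthm} applied to $\mathcal{L}$ and $\mathcal{K}_i$. Hence $F_{d+i}/F_{-1}\cong\mathcal{L}^{\mathcal{K}_i}$.

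For the co-faces I would analyze $\langle \widehat{R}_{i+1},\dots,\widehat{R}_{d+k-1}\rangle$, which contains $\langle\rho_{i+1},\dots,\rho_{d-1}\rangle$ and all of $\Gamma(\mathcal{K})=\langle R_0,\dots,R_{k-1}\rangle$. Since $\Gamma(\mathcal{K})$ is vertex-transitive and $\sigma_F=\varphi^{-1}\rho_{d-1}\varphi$ whenever $F=F_0\varphi$, conjugating $\rho_{d-1}=\sigma_{F_0}$ by $\Gamma(\mathcal{K})$ produces every $\sigma_F$; thus this subgroup equals $W'\ltimes\Gamma(\mathcal{K})$, where $W'=\langle\rho_{i+1},\dots,\rho_{d-2},\sigma_F\mid F\in V(\mathcal{K})\rangle$ and $\Gamma(\mathcal{K})$ acts by fixing $\rho_{i+1},\dots,\rho_{d-2}$ and permuting the $\sigma_F$. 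Again $W'$ is the parabolic subgroup of $W$ on the induced subdiagram on $\{i+1,\dots,d-2\}\cup V(\mathcal{K})$: here the string $q_{i+2},\dots,q_{d-2}$ runs along $i+1,\dots,d-2$ and the node $d-2$ carries $q_{d-1}$-branches to every node of $V(\mathcal{K})$. This is exactly the construction diagram with $\mathcal{L}$ replaced by its co-face $\mathcal{L}_i=\{q_{i+2},\dots,q_{d-1}\}$ (itself universal, with group $\langle\rho_{i+1},\dots,\rho_{d-1}\rangle$). Consequently $\langle \widehat{R}_{i+1},\dots,\widehat{R}_{d+k-1}\rangle$, with generating subgroups $\langle\rho_{i+1}\rangle,\dots,\langle\rho_{d-1}\rangle,R_0,\dots,R_{k-1}$, is the group of Theorem~\ref{twistthm} built from $\mathcal{L}_i$ and $\mathcal{K}$, whence $F_{d+k}/F_i\cong\mathcal{L}_i^{\mathcal{K}}$.

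The main obstacle is the identification of the two $W$-factors with the full Coxeter groups on the induced subdiagrams of $\mathcal{D}$. This rests on the standard fact that a subgroup of a Coxeter group generated by a subset of its standard generators is the parabolic subgroup carried by the induced subdiagram, but it must be combined with a careful check that these induced subdiagrams literally coincide with the diagrams prescribed by the construction, including the placement of the $q_{d-1}$-branches and the dual role of the node $F_0=d-1$. Once the diagrams and the accompanying semi-direct product actions are seen to agree, matching the generating-subgroup data and invoking the reconstruction result of \cite{kom2} closes both parts; the remaining verifications (the semi-direct structure and the equalities $V(\mathcal{G},\{0,\dots,i-1\})=V(\mathcal{K}_i)$ and $\rho_{d-1}=\sigma_{F_0}\in W'$) are routine.
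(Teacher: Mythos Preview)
Your proposal is correct and follows essentially the same route as the paper's own proof: both parts amount to identifying $\langle\widehat{R}_0,\dots,\widehat{R}_{d+i-1}\rangle$ and $\langle\widehat{R}_{i+1},\dots,\widehat{R}_{d+k-1}\rangle$ with the semi-direct products arising from the induced subdiagrams of $\mathcal{D}$ on $\{0,\dots,d-2\}\cup V(\mathcal{K}_i)$ and on $\{i+1,\dots,d-2\}\cup V(\mathcal{G})$, respectively, and then recognizing these as the construction data for $\mathcal{L}^{\mathcal{K}_i}$ and $\mathcal{L}_i^{\mathcal{K}}$. Your write-up is in fact more explicit than the paper's (which compresses the co-face case into a single sentence), particularly in spelling out the parabolic-subgroup identification and the use of the reconstruction result from \cite{kom2}; one small caution is that $\langle R_0,\dots,R_{i-1}\rangle$ need only be a flag-transitive subgroup of $\Gamma(\mathcal{K}_i)$ rather than all of it, but the construction is insensitive to this by the remark preceding Theorem~\ref{twistskel}.
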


\begin{proof}
Let $\mathcal{G}_i$ denote the induced subdiagram of $\mathcal{G}$ on the subset $V(\mathcal{G},\{0, \dotsc, i-1\})$ of~$V(\mathcal{G})$. Then $\mathcal{G}_i$ is trivial since $\mathcal{G}$ is trivial. For the first part of the theorem, notice that $V(\mathcal{G},\{0,\dotsc, i-1\})=V(\mathcal{G}_i)$. Then apply (\ref{8B7}) with $j=d+i-1$ to obtain 
\begin{align}
\label{facestr}
\langle \widehat{R}_0,\dotsc, \widehat{R}_{d+i-1}\rangle &\,=\, \langle \rho_k, \sigma_H|\; 0\leq k\leq d-1, H\in V(\mathcal{G}_i)\rangle\, \langle R_0,\dotsc, R_{i-1}\rangle \nonumber\\
&\,=\, \langle \rho_k, \sigma_H|\; 0\leq k\leq d-1, H\in V(\mathcal{G}_i)\rangle\rtimes \langle R_0,\dotsc, R_{i-1}\rangle.\nonumber
\end{align}
Because of the specific semi-direct product structure, this latter group is precisely the analogue of $\Gamma$ obtained when $\mathcal{K}$ and $\mathcal{G}$ are replaced by $\mathcal{K}_i$ and $\mathcal{G}_i$, respectively. It follows that the $(d+i)$-faces of $\mathcal{L}^{\mathcal{K}}=\mathcal{L}^{\mathcal{K}, \mathcal{G}}$ are isomorphic to $\mathcal{L}^{\mathcal{K}_i}=\mathcal{L}^{\mathcal{K}_i, \mathcal{G}_i}$. For the second part, the construction using the subdiagram of $\mathcal{D}$ induced on $V(\mathcal{G}) \cup \{i+1, \dotsc, d-2\}$ is the same as the construction using  $\mathcal{L}_i$, $\mathcal{K}$ and $\mathcal{G}$.
\end{proof}

Theorem~\ref{subcomplex} says that the subgroups associated with the facet and vertex-figure of a regular complex $\mathcal{L}^{\mathcal{K}}$ are what we would expect.  For the facet this subgroup is $\langle \widehat{R}_0,\dotsc, \widehat{R}_{d+k-2}\rangle$, and for the vertex-figure it is $\langle \widehat{R}_1,\dotsc, \widehat{R}_{d+k-1}\rangle$. Note that these subgroups may not be the full automorphism groups of the facet and vertex-figure, respectively.

Note that the power complex $\mathcal{L}^{\mathcal{K}}$ where both $\mathcal{L}$ and $\K$ are simplices is itself a universal regular polytope with a Coxeter group as automorphism group. In fact, 
\begin{equation}
\label{simsim}
\{3^{d-1}\}^{\{3^{k-1}\}} = \{3^{d-1}, 4, 3^{k-1}\}
\end{equation}
(see \cite[Cor. 8B10]{arp}). In particular,
\[ \{3\}^{\{3\}} = \{3,4,3\}, \]
the $24$-cell.
\medskip

We remark that in the construction of the complexes $\mathcal{L}^{\mathcal{K}}$ the automorphism group $\Gamma(\K)$ of $\mathcal{K}$ can be replaced by any flag-transitive subgroup, $\Lambda$ (say), of $\Gamma(\K)$, in the sense that the resulting regular complex obtained from the new group $W\ltimes \Lambda$ (with the same $W$) is isomorphic to $\mathcal{L}^{\mathcal{K}}$. The group $W\ltimes \Lambda$ is a flag-transitive subgroup of $W\ltimes \Gamma(\K)$ and hence of $\Gamma(\mathcal{L}^{\mathcal{K}})$. Its distinguished generator subgroups are given by \[ \langle\rho_0\rangle, \dotsc, \langle\rho_{d-1}\rangle,R'_0, \dotsc, R'_{k-1},\] 
where $R'_0, \dotsc, R'_{k-1}$ are the distinguished generator subgroups of $\Lambda$.

The construction of generalized power complexes $\mathcal{L}^{\mathcal{K}}$ also behaves nicely with respect to taking skeletons.

\begin{theorem}
\label{twistskel}
Let $\mathcal{L}$ be a universal regular $d$-polytope, let $\mathcal{K}$ be a finite regular $k$-incidence complex, and let $j \leq k-1$. Then 
\[\mathrm{skel}_{d+j}(\mathcal{L}^{\mathcal{K}}) = \mathcal{L}^{\mathrm{skel}_{j}(\mathcal{K})}.\]
\end{theorem}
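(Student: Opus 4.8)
The plan is to identify, on both sides, a flag-transitive group together with its system of distinguished generating subgroups, to check that these two sets of data coincide, and then to invoke the reconstruction theorem of \cite{kom2} (which recovers a regular complex, with its partial order \eqref{partorder}, from the group and the subgroups). First note that both complexes have rank $d+j+1$: the left-hand side is a skeleton of the rank-$(d+k)$ complex $\mathcal{L}^{\mathcal{K}}$, while the right-hand side is built over $\mathrm{skel}_j(\mathcal{K})$, which has rank $j+1$. The decisive structural point is that for $j\geq 0$ the complex $\mathrm{skel}_j(\mathcal{K})$ has exactly the same vertex set $V$ as $\mathcal{K}$, and is again vertex-describable, with $\Gamma(\mathcal{K})$ acting on it through the same permutation action on $V$. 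Hence the diagram $\mathcal{D}$, the Coxeter group $W$, and the twisting action of $\Gamma(\mathcal{K})$ on $W$ are unchanged when $\mathcal{K}$ is replaced by $\mathrm{skel}_j(\mathcal{K})$; both complexes will therefore be governed by a group of the form $W\ltimes\Gamma(\mathcal{K})$ with the same $W$ and the same action.

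The workhorse is a general rule for how skeletons act on distinguished subgroups. If $\mathcal{P}$ is a regular $N$-complex with flag-transitive group $G=\langle S_0,\dots,S_{N-1}\rangle$ and base flag $\{F_{-1},\dots,F_N\}$, then for $0\leq m\leq N-1$ the skeleton $\mathrm{skel}_m(\mathcal{P})$ is again a regular complex, $G$ still acts flag-transitively on it, its flag stabilizer is $\langle S_{m+1},\dots,S_{N-1}\rangle$, and its distinguished generating subgroups are $S_l^{\ast}=\langle S_l,S_{m+1},\dots,S_{N-1}\rangle$ for $0\leq l\leq m$. I would obtain this from the bijection between flags of $\mathrm{skel}_m(\mathcal{P})$ and chains of $\mathcal{P}$ of type $\{0,\dots,m\}$, combined with the stabilizer formula $G_\Omega=\langle S_i\mid F_i\notin\Omega\rangle$ of Section~\ref{bano}.

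Applying this to the left-hand side with $\mathcal{P}=\mathcal{L}^{\mathcal{K}}$, $N=d+k$, $m=d+j$ and the generators \eqref{gengamma}, and using $\widehat{R}_{d+j+1}=R_{j+1},\dots,\widehat{R}_{d+k-1}=R_{k-1}$, the distinguished subgroups of $\mathrm{skel}_{d+j}(\mathcal{L}^{\mathcal{K}})$ are $\langle\rho_l,R_{j+1},\dots,R_{k-1}\rangle$ for $0\leq l\leq d-1$ and $\langle R_{l-d},R_{j+1},\dots,R_{k-1}\rangle$ for $d\leq l\leq d+j$, with flag stabilizer $\langle R_{j+1},\dots,R_{k-1}\rangle$. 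On the right-hand side I would first apply the same rule to $\mathcal{K}$ (with $N=k$, $m=j$) to get the distinguished subgroups $\langle R_l,R_{j+1},\dots,R_{k-1}\rangle$ of $\mathrm{skel}_j(\mathcal{K})$, and then feed these into the construction \eqref{gengamma} for $\mathcal{L}^{\mathrm{skel}_j(\mathcal{K})}$; because $\mathrm{skel}_j(\mathcal{K})$ is no longer simply flag-transitive under $\Gamma(\mathcal{K})$, its flag stabilizer $\langle R_{j+1},\dots,R_{k-1}\rangle$ is absorbed into the first $d$ generators (see the next paragraph), giving $\langle\rho_i,R_{j+1},\dots,R_{k-1}\rangle$ for $0\leq i\leq d-1$ and $\langle R_{i-d},R_{j+1},\dots,R_{k-1}\rangle$ for $d\leq i\leq d+j$. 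A term-by-term comparison then shows that the two families of generating subgroups, the two ambient groups $W\ltimes\Gamma(\mathcal{K})$, and the two flag stabilizers all coincide, so \cite{kom2} yields the asserted isomorphism.

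The step I expect to be the main obstacle is the bookkeeping of the flag stabilizer. In the unskeletonized construction the generators for $i<d$ are just $\widehat{R}_i=\langle\rho_i\rangle$, the flag stabilizer being trivial once $\Gamma(\mathcal{K})$ is taken simply flag-transitive on $\mathcal{K}$ (as permitted by the remark following Theorem~\ref{subcomplex}). Passing to a skeleton, however, promotes $\langle R_{j+1},\dots,R_{k-1}\rangle$ to the flag stabilizer, and since every distinguished subgroup must contain the flag stabilizer, the honest generators for $i<d$ on the right-hand side are $\langle\rho_i\rangle\cdot\langle R_{j+1},\dots,R_{k-1}\rangle$, not $\langle\rho_i\rangle$ alone. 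I must therefore verify that this product is a genuine subgroup --- which it is, because $R_{j+1},\dots,R_{k-1}$ lie in the stabilizer $\langle R_1,\dots,R_{k-1}\rangle$ of the base vertex $F_0$ and hence centralize $\rho_{d-1}=\sigma_{F_0}$ as well as $\rho_0,\dots,\rho_{d-2}$ --- that the same subgroup is the flag stabilizer on both sides, and that the commutation property \eqref{commu} and intersection property \eqref{intprop} survive. As a sanity check, for $\mathcal{L}=\mathcal{K}=\{3\}$ and $j=0$ the claim reads $\mathrm{skel}_2(\{3,4,3\})=\{3\}^{\{\}_3}$; here $W=W(D_4)$, the common flag stabilizer has order $2$, and both complexes have $1152/2=576$ flags, which confirms the accounting.
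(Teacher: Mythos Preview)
Your approach is correct and is essentially the same as the paper's: identify the same flag-transitive group $W\ltimes\Gamma(\mathcal{K})$ acting on both sides, compute the associated system of generating subgroups for each, observe that they agree, and invoke the uniqueness part of the reconstruction theorem from \cite{kom2}. The paper writes the generating subgroups for the skeletons more loosely --- it records $R_0,\dots,R_{j-1},R$ with $R=\langle R_j,\dots,R_{k-1}\rangle$ for $\mathrm{skel}_j(\mathcal{K})$ and $\widehat{R}_0,\dots,\widehat{R}_{d+j-1},\widehat{R}$ for $\mathrm{skel}_{d+j}(\mathcal{L}^{\mathcal{K}})$, without adjoining the tail $\langle R_{j+1},\dots,R_{k-1}\rangle$ to the lower-index subgroups --- whereas you track the flag stabilizer explicitly and enlarge each $\widehat{R}_l$ accordingly; but since the resulting face stabilizers $\langle \widehat{R}_i\mid i\neq l\rangle$ coincide in the two descriptions, both lead to the same reconstructed complex, and your extra bookkeeping simply makes the argument more rigorous at the point you yourself flagged as the main obstacle.
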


\begin{proof}
As before let $\Gamma := W\ltimes \Gamma(\K) = \langle\widehat{R}_0,\dotsc, \widehat{R}_{d+k-1}\rangle$, where the distinguished generators are as in \eqref{gengamma}. Recall that $\Gamma$ is flag-transitive subgroup of $\Gamma(\mathcal{L}^{\mathcal{K}})$.

First note that $\mathrm{skel}_j(\mathcal{K})$ is a regular $(j+1)$-complex on which $\Gamma(\K)$ acts flag-transitively as a group of automorphisms. Relative to $\mathrm{skel}_j(\mathcal{K})$, the distinguished generator subgroups of $\Gamma(\K)$ are given by $R_0, \dotsc, R_{j-1}, R$, where $R=\langle R_j, \dotsc, R_{k-1}\rangle$.

Now bear in mind the remark preceding the theorem. Then it is clear that the regular complex $\mathcal{L}^{\mathrm{skel}_j(\mathcal{K})}$ can be constructed from the flag-transitive subgroup  $\Gamma(\K)$ of $\Gamma(\mathrm{skel}_j(\mathcal{K}))$, rather than from $\Gamma(\mathrm{skel}_j(\mathcal{K}))$ itself. The resulting flag-transitive subgroup of $\Gamma(\mathcal{L}^{\mathrm{skel}_j(\mathcal{K})})$ then is again $\Gamma$, with distinguished generators relative to $\mathcal{L}^{\mathrm{skel}_j(\mathcal{K})}$ given by $\widehat{R}_0,\dotsc,\widehat{R}_{d-1}, R_0,\dotsc, R_{j-1}, R$.

On the other hand, the $(d+j)$-skeleton of $\mathcal{L}^{\mathcal{K}}$ admits the subgroup $\langle \widehat{R}_0,\dotsc, \widehat{R}_{d+j-1}, \widehat{R}\rangle$, with $\widehat{R} := \langle \widehat{R}_{d+j},\dotsc, \widehat{R}_{d+k-1}\rangle$, as a flag-transitive subgroup.  But $\widehat{R}_i = R_{i-d}$ for $i\geq d$, so $\widehat{R} = R$ and 
\[\langle \widehat{R}_0,\dotsc, \widehat{R}_{d+j-1}, \widehat{R}\rangle 
= \langle \widehat{R}_0,\dotsc, \widehat{R}_{d-1}, R_0,\dotsc, R_{j-1}, R\rangle 
= \Gamma.\]  
Hence, since the reconstruction of regular complexes from flag-transitive subgroups is unique, we must have 
$\mathrm{skel}_{d+j}(\mathcal{L}^{\mathcal{K}}) = \mathcal{L}^{\mathrm{skel}_j(\mathcal{K})}$.
\end{proof}

As an illustration of the previous theorem observe that
\begin{equation}
\{3^{d-1}\}^{\{\}_{k}} = \mathrm{skel}_{d}(\{3^{d-1}, 4, 3^{k-2}\}) .
\end{equation}
In fact, the $k$-edge $\{\}_k$ is the 0-skeleton of the $(k-1)$-simplex $\{3^{k-2}\}$, and by Theorem~\ref{twistskel} and \eqref{simsim}, we have 
\[\{3^{d-1}\}^{\{\}_k} = \{3^{d-1}\}^{\mathrm{skel}_0(\{3^{k-2}\})} = \mathrm{skel}_{d}(\{3^{d-1}\}^{\{3^{k-2}\}}) = \mathrm{skel}_{d}(\{3^{d-1}, 4, 3^{k-2}\}).\]
Thus the $(d+1)$-complex $\{3^{d-1}\}^{\{\}_k}$ is the $d$-skeleton of the regular $(d+k-1)$-polytope $\{3^{d-1},4,3^{k-2}\}$. In particular, 
\[ \{3\}^{\{\}_{3}} = \mathrm{skel}_{2}(\{3,4,3\}.\] 

The 3-complex $\{3\}^{\{\}_{3}}$ also occurs as a facet of the infinite regular $4$-complex $\{3\}^{\mathrm{PG}(2,2)}$. The diagram of the underlying Coxeter group $W$ is a star with seven unmarked branches, so both $W$ and $\Gamma=W\rtimes \mathrm{PGL}(3,2)$ are infinite groups. The latter is a flag-transitive subgroup of the automorphism group of $\{3\}^{\mathrm{PG}(2,2)}$. The vertex-figure of $\{3\}^{\mathrm{PG}(2,2)}$ is isomorphic to the $3$-complex $2^{\mathrm{PG}(2,2)}$ described earlier.
\medskip

The construction of $\mathcal{L}^{\mathcal{K},\mathcal{G}}$ described above extends more generally to regular complexes $\mathcal{K}$ and $\mathcal{L}$ for which a suitable group $W$, not necessarily a Coxeter group, can be found. To briefly discuss the limitations of this approach let again $\mathcal{L}$ and $\mathcal{K}$ be regular, of ranks $d$ and $k$ respectively, and let $\Gamma(\mathcal{L}) = \langle L_0,\dotsc, L_{d-1}\rangle$ and $\Gamma(\K) = \langle R_0,\dotsc, R_{k-1}\rangle$.  Unlike in the above construction of $\mathcal{L}^{\mathcal{K},\mathcal{G}}$ we are not assuming that $\mathcal{L}$ is a polytope.  Now suppose there exists a group $W$ generated by a distinguished family of subgroups, such that $\Gamma(\K)$ acts suitably on $W$ as a group of group automorphisms permuting the subgroups in this distinguished family. More precisely, suppose that $\Gamma(\mathcal{L})$ is a subgroup of this group $W$, and that the distinguished family of generating subgroups of $W$ (the analogues of the $\rho_{i}$'s and $\sigma_H$'s) is given by the generating subgroups $L_0,\dotsc, L_{d-1}$ of $\Gamma(\mathcal{L})$ and by subgroups $S_H$, $H\in V(\mathcal{K})$. Further assume that the subgroup $S_{F_0}$ associated with the base vertex $F_0$ of $\mathcal{K}$ coincides with the subgroup $L_{d-1}$ of $\Gamma(\mathcal{L})$, and that the action of $\Gamma(\K)$ on $W$ resembles the one used in the construction of $\mathcal{L}^{\mathcal{K},\mathcal{G}}$.  In other words, the elements $\varphi\in\Gamma(\K)$ leave each subgroup $L_i$ with $i\leq d-2$ invariant and take a subgroup $S_H$ to the subgroup $S_{(H)\varphi}$ for $H\in V(\mathcal{K})$, and the vertex stabilizer subgroup $\langle R_1, \dotsc, R_{k-1}\rangle$ of $\Gamma(\K)$ leaves the subgroup $S_{F_0}$ of $W$ invariant. Then this setup already allows us to construct the semi-direct product 
\[\Gamma:= W \ltimes \Gamma(\K) = \langle  \widehat{R}_0,\dotsc, \widehat{R}_{d+k-1}\rangle,\]
where
\begin{equation*}
\widehat{R}_i := 
\begin{cases}
L_i & \mbox{if }i = 0,\dotsc, d-1\\
R_{i-d} &\mbox{if } i = d, d+1, \dotsc, d+k-1.
\end{cases}
\end{equation*}
By our assumption on the action of the vertex stabilizer subgroup $\langle R_1,\dotsc, R_{k-1}\rangle$ on $W$ we have the desired commutativity property $\widehat{R}_i\widehat{R}_j = \widehat{R}_j\widehat{R}_i$ for $|i-j \geq 2$.  However, the crucial condition is the intersection property for $\Gamma$, and this can usually only be guaranteed if $W$ already has the intersection property with respect to its distinguished family of generator subgroups.  This severely restricts the possibilities and explains why we have focused on the cases outlined at  the beginning of this section. We should add that in the case where $\mathcal{L}$ is a polytope and the distinguished generator subgroups of $W$ are themselves generated by involutions, the intersection property of $W$ is satisfied if and only if $W$ is a C-group (see \cite[Ch. 2E]{arp}).
\smallskip

{\em Acknowledgement.}
We are very grateful to the anonymous referees for a number of helpful comments that have improved our article.

\bibliographystyle{amsplain}

\begin{thebibliography}{999}

\bibitem{beh} L.W.~Beineke and F.~Harary, The genus of the n-cube,  
{\em Canad.\ J.\ Math.\/} {\bf 17} (1965), 494--496.

\bibitem{ba} L.N.~Bhuyan and D.P.~Agrawal, Generalized hypercube and hyperbus structures for a
computer network, {\em IEEE Transactions on Computers\/} 33 (1984), 323--333.

\bibitem{bks} U.~Brehm, W.~K\"{u}hnel and E.~Schulte, Manifold structures on abstract regular polytopes, {\em Aequationes Mathematicae\/} 49 (1995), 12--35.

\bibitem{buch} V.M.~Buchstaber and T.E. Panov, {\em Torus actions and their applications in topology and combinatorics\/}, American Mathematical Society (Providence, RI, 2002).

\bibitem{bup} F.~Buekenhout and A.~Pasini, Finite diagram geometries extending buildings, In {\em Handbook of Incidence Geometry\/} (ed.\ F.~Buekenhout), Elsevier Publishers (Amsterdam, 1995), 1143--1254.

\bibitem{crsp} H.S.M.~Coxeter, Regular skew polyhedra in 3 and 4 dimensions and their topological analogues,  {\em Proc.\ London Math.\ Soc.} (2) {\bf 43} (1937), 33--62.  (Reprinted with amendments in {\em Twelve Geometric Essays}, Southern Illinois University Press (Carbondale, 1968), 76--105.)

\bibitem{crp} H.S.M.~Coxeter, {\em Regular Polytopes\/} (3rd edition), Dover (New York, 1973).

\bibitem{cox1} H.S.M.~Coxeter, {\em Regular Complex Polytopes\/} (2nd edition), Cambridge University Press (Cambridge, 1991).

\bibitem{dally} W.J.~Dally, Performance analysis of $k$-ary $n$-cube interconnection networks, 
IEEE Transactions on Computers 39 (1990), 775--785.

\bibitem{dan} L.~Danzer, Regular incidence-complexes and dimensionally unbounded sequences of such, I,  In {\em Convexity and Graph Theory (Jerusalem 1981)\/}, North-Holland Math.\ Stud.\ 87, North-Holland (Amsterdam, 1984), 115--127.

\bibitem{kom1} L.~Danzer and E.~Schulte, Regul\"are Inzidenzkomplexe, I, {\em Geom.\ Dedicata\/} {\bf 13} (1982), 295--308.

\bibitem{dss1} N.P.~Dolbilin, M.A.~Shtanko, M. I.~Shtogrin, Cubic subcomplexes in regular lattices (in Russian) 
Dokl. Akad. Nauk SSSR 291 (1986), 277--279. 

\bibitem{dss2} N.P.~Dolbilin, M.A.~Shtanko, M. I.~Shtogrin, Cubic manifolds in lattices (in Russian), Izv. Ross. Akad. Nauk Ser. Mat. 58 (1994), 93--107; translation into English in Russian Acad. Sci. Izv. Math. 44 (1995), 301--313.

\bibitem{du} A.C.~Duke, {\it Cube-like Regular Incidence Complexes\/}, PhD Thesis, Northeastern University, Boston, 2014.

\bibitem{dusc} A.C.~Duke and E.~Schulte, Cube-like polytopes and complexes, In {\em Mathematics of Distances and Applications\/}, (eds. M.Deza, M.Petitjean and K.Markov), ITHEA -- Publisher (Sofia, Bulgaria, 2012), 54--65.

\bibitem{eff} F.~Effenberger and W.~K\"uhnel, Hamiltonian submanifolds of regular polytopes, {\em Discrete Comput. Geometry} {\bf 43} (2010), 242--262.

\bibitem{fcd} J.S.~Fu, G.H.~Chen, and D.R.~Duh, Combinatorial properties of hierarchical cubic networks,
{\em Proceedings ICPADS 2001\/}, 2001, pp. 525--532.

\bibitem{gd} K.~Ghose and K.R.~Desai, Hierarchical cubic networks, IEEE Transactions on Parallel
and Distributed Systems 6 (1995), 427--435.

\bibitem{gcp} B.~Gr\"unbaum, {\em Convex Polytopes}, Graduate Texts in Mathematics, Springer, 2003.

\bibitem{grgcd} B.~Gr\"unbaum, Regularity of graphs, complexes and designs, In {\em Probl\`{e}mes combinatoires et th\'{e}orie des graphes}, Coll.\ Int.\ C.N.R.S.\  260, Orsey (1977), 191--197.

\bibitem{kusc} W.~K\"uhnel and C.~Schulz, Submanifolds of the cube, In {\em Applied Geometry and Discrete Mathematics, The Victor Klee Festschrift\/} (P.~Gritzmann, B.~Sturmfels, eds.), DIMACS Series in Discrete Math. and Theor. Comp. Sci., vol. 4, pp. 423Ð432. Am. Math. Soc. (Providence, 1991).

\bibitem{kuet} W.~K\"uhnel, {\em Tight Polyhedral Submanifolds and Tight Triangulations\/}, Lecture Notes in Mathematics Vol. 1612, Springer-Verlag (Berlin-Heidelberg-New York, 1995).

\bibitem{leem} D.~Leemans, {\em Residually Weakly Primitive and Locally Two-Transitive Geometries for Sporadic Groups\/}, Acad. Roy. Belgique, Mem. Cl. Sci., Coll. 4, Ser. 3, Tome XI(2008).

\bibitem{arp} P.~McMullen and E.~Schulte, \emph{Abstract regular polytopes}, Encyclopedia of Mathematics and its Applications, Vol. 92, Cambridge University Press, Cambridge, UK, 2002.

\bibitem{msw} P.~McMullen, E.~Schulte and J.M.~Wills, Infinite series of combinatorially regular maps in three-space, {\em Geom.\ Dedicata\/} {\bf 26} (1988), 299--307.

\bibitem{msw2} P.~McMullen, Ch.~Schulz and J.M.~Wills, Polyhedral manifolds
in $E^{3}$ with unexpectedly large genus,  {\em Israel J.\ Math.} {\bf 46} (1983), 127--144.

\bibitem{mosc} B.~Monson and E.~Schulte, Finite polytopes have finite regular covers, {\em Journal of Algebraic Combinatorics\/} 40 (2014), 75--82.

\bibitem{nov} S.P.~Novikov, {\em Topology 1, Contemporary Problems of Mathematics, Fundamental Directions}, Vol. 12, VINITI, Moscow, 1986; English transl., Encyclopedia Math. Sci., Vol. 12, Springer-Verlag, BerlinÐNew York, 1988.

\bibitem{psw} T.~Pisanski, E.~Schulte and A.~Ivi\'{c} Weiss, On the size of equifacetted semi-regular polytopes, {\em Glasnik Matemati\v{c}ki\/} (10~pp, to appear).

\bibitem{ri} G.~Ringel, \"Uber drei kombinatorische Probleme am
n-dimensionalen W\"urfel und W\"urfelgitter,  {\em Abh.\ Math.\ Sem.\
Univ.\ Hamburg\/} {\bf 20} (1956), 10--19.

\bibitem{kom2} E.~Schulte, Regul\"are Inzidenzkomplexe, II, {\em Geom.\ Dedicata\/} {\bf 14} (1983), 33--56.

\bibitem{esext} E.~Schulte, Extensions of regular complexes,  In {\em Finite
Geometries\/} (eds.\ C.A.~Baker and  L.M.~Batten), Lecture Notes Pure
Applied Mathematics {\bf 103}, Marcel Dekker (New York, 1985), 289--305.

\bibitem{esnotes} E.~Schulte, Danzer's Complex $n^\K$, {\it unpublished notes}, 1984.

\bibitem{shep} G.C.~Shephard, Regular complex polytopes, {\em Proc.\ London\
Math.\ Soc.} (3) {\bf 2} (1952), 82--97.

\bibitem{sheptodd} G.C.~Shephard and J.A.~Todd, Finite unitary reflection
groups, {\em Canad.\ J.\ Math.} {\bf 6} (1954), 274--304.

\bibitem{tib} J.~Tits, {\em Buildings of Spherical type and finite BN-Pairs\/}, Lecture Notes in Mathematics Vol. 386, Springer-Verlag (New York-Heidelberg-Berlin, 1974).

\end{thebibliography}

\end{document}